\documentclass{basm}

\usepackage{calc}

\usepackage{tikz}

\usetikzlibrary{decorations.markings}

\tikzstyle{vertex}=[circle, draw, inner sep=0pt, minimum size=6pt]

\newcommand{\vertex}{\node[vertex]}

\usepackage{amsmath,amssymb,url}
\usepackage{lineno}
\usepackage{verbatim}

\numberwithin{equation}{section}


\DeclareMathOperator{\Lh}{Lh}
\DeclareMathOperator{\Rh}{Rh}
\DeclareMathOperator{\Lbranch}{Lbranch}
\DeclareMathOperator{\Rbranch}{Rbranch}

\newcommand{\eq}[1]{\begin{equation} #1 \end{equation}}
\newcommand{\eql}[2]{\begin{equation} \label{#1} #2 \end{equation}}
\newcommand{\eqt}[2]{\begin{equation*} #2 \tag{#1} \end{equation*}}

\newcommand{\ga}{\alpha}
\newcommand{\gb}{\beta}
\renewcommand{\gg}{\gamma}
\newcommand{\gd}{\delta}

\newcommand{\gl}{\lambda}

\newcommand{\gr}{\varrho}
\newcommand{\gs}{\sigma}

\newcommand{\gt}{\tau}


 \author{Amir Ehsani, Aleksandar Krape\v{z} and Yuri Movsisyan}

\title{Algebras with Parastrophically Uncancellable Quasigroup Equations}

\RunningHead{A. Ehsani, A. Krape\v{z}, Yu. Movsisyan}%
            {Algebras With Parastrophically Uncancellable Quasigroup Eqs.}

\Keywords{Quadratic equation, Gemini equation, Level equation, Balanced equation, Belousov equation, Medial--like equation, Parastrophically uncancellable equation, Quasigroup operation, Algebra of quasigroup operations, Hyperidentity}

\MSC{20N05; 39B52; 08A05}

\Abstract{We consider $48$ parastrophically uncancellable quadratic functional equations with four object variables and two quasigroup operations in two classes: balanced non--Belousov (consists of $16$ equations) and non--balanced 
non--gemini (consists of $32$ equations). A linear representation of a group (Abelian group) for a pair of quasigroup operations satisfying one of these parastrophically uncancellable quadratic equations is obtained. As a consequence of these results, a linear representation for every operation of a binary algebra satisfying one of these hyperidentities is obtained.}

\CopyRight { Amir Ehsani, Aleksandar Krape\v{z} and Yuri Movsisyan}

\Address
{{\sc Amir Ehsani}\\
{Department of Mathematics, Mahshahr Branch, Islamic Azad University, Mahshahr, Iran.}\\
E-mail: \emph{a.ehsani@mhriau.ac.ir}
\medskip

{\sc Aleksandar Krape\v{z}}\\
{Mathematical Institute of the Serbian Academy of Sciences and Arts, Knez Mihailova 36, 11001 Belgrade, Serbia.}\\
E-mail: \emph{sasa@mi.sanu.ac.rs}
\medskip

{\sc Yuri Movsisyan}\\
{Department of Mathematics and Mechanics,  Yerevan State University,  Alex Manoogian 1, Yerevan 0025,
Armenia.}\\
E-mail: \emph{yurimovsisyan@yahoo.com}
}

 \Received { \ }


\begin{document}

\maketitle

{\flushright \bf Dedicated to \\
V. D. Belousov \\
and \\
G. B. Belyavskaya \\}
\section{Introduction}

A binary quasigroup is usually defined to be a groupoid
$(B; f)$ such that for any $a,b\in B$ there are unique solutions
$x$ and $y$ to the following equations:
 \[f(a,x)=b \ \ \ \text{and}\ \ \ f(y,a)=b.\]
The basic properties of quasigroups were given in books  \cite{26, Denes, pfl, CheinPflugfelderSmith}. We remind the reader of those properties we shall use in the paper.

If $(B; f)$ is quasigroup we say that $f$ is a quasigroup operation.
A loop is a quasigroup with unit $(e)$ such that
\[f(e,x)=f(x,e)=x.\] Groups are associative quasigroups, i.e. they
satisfy:
\[f(f(x,y),z)=f(x,f(y,z))\] and they necessarily contain a unit.
A quasigroup is commutative if 
\begin{equation}\label{comm}
f(x,y)=f(y,x).
\end{equation}
Commutative groups are also known as Abelian groups.

A triple $(\alpha,\beta,\gamma)$ of bijections from a set $B$ onto
a set $C$ is called an isotopy of a groupoid $(B; f)$ onto a
groupoid $(C; g)$ provided \[\gamma f(x, y)=g(\alpha x,\beta
y)\] for all $x,y\in B$. 
$(C; g)$ is then called an isotope of
$(B; f)$, and groupoids $(B; f)$ and $(C; g)$ are
called isotopic to each other. An isotopy of $(B; f)$ onto
$(B; f)$ is called an autotopy of $(B; f)$.
Let $\alpha$ and $\beta$ be permutations of $B$ and let $\iota$ denote
the identity map on $B$. Then $(\alpha,\beta,\iota)$ is a
principal isotopy of a groupoid $(B; f)$ onto a groupoid
$(B; g)$ means that $(\alpha,\beta,\iota)$ is an isotopy of
$(B; f)$ onto  $(B; g)$.
Isotopy is a generalization of isomorphism. Isotopic image of a
quasigroup is again a quasigroup. A loop isotopic to a group is isomorphic to it.
Every quasigroup is isotopic to some loop i.e., it is a loop isotope.

If $(B; +)$ is a group, then the bijection $\alpha:
B\rightarrow B$ is called a \textit{holomorphism} of $(B;+)$ if
\eq{\alpha (x+ y^{-1}+ z)=\alpha x+ (\alpha y)^{-1}+ \alpha z.}
The set of all holomorphisms  of $(B;+)$ is denoted by
$Hol(B; +)$. It is a group under the composition of
mappings: $(\alpha \cdot\beta)x=\beta(\alpha x)$, for every $x\in
B$. Note that this concept is equivalent to the concept of quasiautomorphism of groups, by \cite{26}. 

A binary quasigroup $(B;f)$ is linear over a  group (Abelian group) if
\[f(x,y)=\varphi x+a+\psi y,\] where $(B;+)$ is a  group (Abelian group), $\varphi$
and $\psi$ are automorphisms of  $(B;+)$ and $a\in B$ is
a fixed element.
  Quasigroup linear over an Abelian group is also called a
 $T$-quasigroup.

Quasigroups are important algebraic (combinatorial, geometric) structures which
arise in various areas of mathematics and other disciplines. We mention just a few
of their applications: in combinatorics (as latin squares, see \cite{Denes}), in geometry (as nets/webs, see \cite{Belousov}), in statistics (see \cite{Fisher}), in special theory of relativity (see \cite{Ungar}), in coding theory and cryptography (\cite{Shcher}).

\section{Preliminaries}

We use (object) variables $x, y, z, u, v, w$ (perhaps with indices) and operation symbols (i.e. functional variables) $f, g, h$  (also with indices).
We assume that all operation symbols represent quasigroup operations.

The set of all variables which appear in a term $t$ is called the \textit{content} of $t$ and is denoted by $var(t)$. 
 A variable $x$ is \textit{linear variable} in a term $t$, when it occurs just once in $t$. A variable $x$ is \textit{quadratic variable} in a term $t$, when it occurs twice in  $t$. The sets of all linear and quadratic variables of term $t$ are denoted by $var_1(t)$ and $var_2(t)$, respectively.

A \textit{functional equation} is an equality $s = t$, where $s$ and $t$ are terms with symbols of unknown operations occurring in at least one of them.
 
%
\begin{defn}
A functional equation $s=t$ is \textit{quadratic} if every (object) variable occurs exactly twice in $s=t$.
It is \textit{balanced} if every (object) variable appears exactly once in $s$ and once in $t$. 
\end{defn}
\begin{defn}
A variable $x$ from a quadratic equation $s=t$ is \emph{linear} if $x$ occurrs once in $s$ and once in $t$\/; it is \emph{left (right) quadratic} if it occurrs twice in $s \, (t)$ and \emph{quadratic} if it is either left or right quadratic.
\end{defn}
\begin{defn}
A balanced equation $s = t$ is \textit{Belousov} if for every subterm $p$ of $s$ ($t$) there is a subterm $q$ of $t$ ($s$) such that $p$ and $q$ have exactly the same variables.
\end{defn}
\begin{defn}
A quadratic quasigroup equation is \textit{gemini} iff it is a theorem of $TS$-loops ($=$ Steiner loops) i.e., consequence of the identities of the variety of $TS$-loops.
\end{defn}
\begin{defn}
Functional equation $s = t$ is \textit{generalized} if every operation symbol from $s = t$ occurrs there just once.
\end{defn}
\begin{defn}
Let $x$ be a variable occurring in a quadratic equation $s = t$\/. The function $\Lh \; (\Rh)$ of the \em{left (right) height} of the variable $x$ in the equation $s = t$ is given by: 
\begin{description}
\item[$-$] If $x\notin var(t)$, then $\Lh(x,t) \;\; (\Rh(x,t))$ is not defined,
\item[$-$] $Lh(x,x)= 0 \;\; (Rh(x,x)= 0)$\/,
\item[$-$] If $t = f(t_1,t_2)$ and there is no occurrence of $x$ in $t_2$ then 
               $\Lh(x,t) = 1 + \Lh(x,t_1) \;\; (\Rh(x,t) = 1 + \Rh(x,t_1))$\/,
\item[$-$] If $t = f(t_1,t_2)$ and there is no occurrence of $x$ in $t_1$ then 
               $\Lh(x,t) = 1 + \Lh(x,t_2) \;\; (\Rh(x,t) = 1 + \Rh(x,t_2))$\/,
\item[$-$] If $t = f(t_1,t_2)$ and $x$ occurrs in both $t_1$ and $t_2$ then 
               $\Lh(x,t) = 1 + \Lh(x,t_1) \;\; (\Rh(x,t) = 1 + \Rh(x,t_2))$\/,
\item[$-$] $
\Lh(x,s=t) = 
\begin{cases}
\Lh(x,s),  & \text{if } x \in var(s) \\
\Lh(x,t), & \text{otherwise,}
 \end{cases}
$
\item[$-$] $
\Rh(x,s=t) = 
\begin{cases}
\Rh(x,t),  & \text{if }x \in var(t) \\
\Rh(x,s), & \text{otherwise.}
 \end{cases}
$
\end{description}
\end{defn}

\begin{defn}
Let $s = t$ be a quadratic equation. It is a \em{level equation} iff $\Lh(x,s = t) = \Rh(y,s = t)$ for all variables $x, y$ of $s = t$\/. 
\end{defn}

 \begin{exa}
The following are various functional equations:
\begin{align}
\label{com}&\text{(commutativity)} \qquad &f(x,y)=f(y,x) , \\
\label{ass}&\text{(associativity)} \qquad  &f(f(x,y),z)=f(x,f(y,z)) , \\
\label{med}&\text{(mediality)} \qquad &f(f(x,y),f(u,v))=f(f(x,u),f(y,v)) ,\\
\label{par} &\text{(paramediality)} \qquad &f(f(x,y),f(u,v))=f(f(v,y),f(u,x)) ,\\
\label{dis}&\text{(distributivity)} \qquad  &f(x,f(y,z))=f(f(x,y),f(x,z)) , \\
\label{tra}&\text{(transitivity)} \qquad  &f(f(x,y),f(y,z))=f(x,z) ,\\
\label{inmed} & \text{(intermediality)} \qquad &f(f(x,y),f(y,u))=f(f(x,v),f(v,u)) ,\\
\label{exmed} & \text{(extramediality)}\qquad  &f(f(x,y),f(u,x))=f(f(v,y),f(u,v)) ,\\
\label{4pal} & \text{(4-palindromic identity)}\qquad & f(f(x,y),f(u,v))=f(f(v,u),f(y,x)) ,\\
\label{ide}&\text{(idempotency)} \qquad &f(x,x)=x  ,\\
\label{trivial} &\text{(trivial)} \qquad & f(x,y)=f(x,y) ,\\
\label{00} &&f(x,f(y,z))=f(f(z,y),x). 
\end{align}
Associativity, (para)mediality, 
$4$-palindromic, trivial identity and  $\eqref{00}$ are balanced, 
transitivity, intermediality and extramediality  are quadratic but not balanced and idempotency and (left) distributivity are not even quadratic.
 Commutativity, trivial, $4$-palindromic and $\eqref{00}$ are gemini functional equations and since they are balanced,  they  are Belousov equations as well. The equations $\eqref{ass}-\eqref{exmed}$ are non-gemini and non-Belousov.
Commutativity, mediality, paramediality, intermediality, extramediality, 4--palindromic and trivial identity are level equations. 
 \end{exa}

Every quasigroup satisfying (para)medial identity is called \textit{(para)medial quasigroup}. Every quasigroup satisfying $4$-palindromic identity is called \textit{$4$-palindromic quasigroup}.

\begin{thm}[Toyoda \cite{22}]\label{quasimedial}
If $(B; f)$ is a medial quasigroup then there exists an Abelian group $(B;+)$, such that
$f(x, y)=\varphi(x)+c+\psi(y)$, where $\varphi,\psi\in Aut
(B; +)$, $\varphi\psi=\psi\varphi$ and $c\in B$.
\end{thm}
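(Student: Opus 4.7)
The plan is the classical Toyoda--Bruck--Murdoch isotopy argument: convert $(B,f)$ to a principal loop isotope $(B,+)$, show that mediality of $f$ forces $(B,+)$ to be an abelian group, and then read off the commuting automorphisms.

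First I fix any $e\in B$ and introduce the bijections $\rho(x)=f(x,e)$ and $\lambda(y)=f(e,y)$. Setting $x+y=f(\rho^{-1}x,\lambda^{-1}y)$ defines a new operation for which $f(x,y)=\rho(x)+\lambda(y)$, and a routine check shows that $(B,+)$ is a loop with neutral element $e_0=f(e,e)$ satisfying $\rho(e)=\lambda(e)=e_0$.

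Next I substitute $f(x,y)=\rho(x)+\lambda(y)$ into the medial identity to obtain
\[
\rho(\rho x+\lambda y)+\lambda(\rho u+\lambda v)=\rho(\rho x+\lambda u)+\lambda(\rho y+\lambda v), \qquad (\ast)
\]
and specialise $(\ast)$ at tuples in which some of $x,y,u,v$ equal $e$, using $\rho(e)=\lambda(e)=e_0$ and the loop laws. These specialisations successively collapse the inner expressions and produce the medial identity $(a+b)+(c+d)=(a+c)+(b+d)$ for $+$ itself. A medial loop is an abelian group: setting $b=e_0$ in the medial law yields associativity, while setting $a=d=e_0$ yields commutativity. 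Hence $(B,+)$ is an abelian group.

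Once $(B,+)$ is an abelian group, further specialisations of $(\ast)$ together with cancellation in $(B,+)$ force $\rho$ and $\lambda$ to be affine maps: $\rho(x)=\varphi(x)+a_1$ and $\lambda(y)=\psi(y)+a_2$ with $\varphi,\psi\in Aut(B,+)$, and a direct comparison of the two sides of $(\ast)$ then gives $\varphi\psi=\psi\varphi$. Writing $c=a_1+a_2$ and using commutativity of $+$ finally produces $f(x,y)=\varphi(x)+c+\psi(y)$, as required.

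The main obstacle is the abelianisation step: the nested applications of $\rho$ and $\lambda$ in $(\ast)$ do not simplify directly, and one has to choose the specialisations very carefully so that the outer $\rho$'s and $\lambda$'s become applied to $e_0$ (or to a simple shift thereof) and leave only the clean medial law for $+$. Once this is achieved, the remaining identification of $\rho,\lambda$ with affine maps whose linear parts commute is a bookkeeping exercise inside the abelian group.
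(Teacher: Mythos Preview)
The paper does not supply a proof of this theorem: it is quoted as Theorem~2.1 with a citation to Toyoda and used only as background, so there is no in-paper argument to compare yours against.

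Your outline is the classical Toyoda--Bruck--Murdoch route and is sound in architecture. The one place that remains a genuine gap is precisely the step you flag as the ``main obstacle'': you assert that specialisations of $(\ast)$ yield the medial law $(a+b)+(c+d)=(a+c)+(b+d)$ for $+$, but the natural substitution $a=\rho x,\ b=\lambda y,\ c=\rho u,\ d=\lambda v$ turns $(\ast)$ into
\[
\rho(a+b)+\lambda(c+d)=\rho\bigl(a+\lambda\rho^{-1}c\bigr)+\lambda\bigl(\rho\lambda^{-1}b+d\bigr),
\]
which is not mediality of $+$. The classical proofs close this gap by first extracting intermediate identities (for instance showing that $\rho,\lambda$ act as quasi-automorphisms of the loop, or passing through a commutative-Moufang step) before the clean medial law for $+$ emerges; your proposal does not indicate which of these routes you intend, so as written it is a plan rather than a proof.

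For comparison, the paper's own machinery does yield a proof of Toyoda's theorem as a special case: Theorem~4.1 with $f_1=f_2$ covers the medial identity (equation~(4.1)), and its proof obtains abelian-group isotopy in one stroke via the Acz\'el--Belousov--Hossz\'u theorem (Theorem~2.4), then uses the holomorphism Lemmas~3.1--3.2 to upgrade the isotopy bijections to affine maps with automorphism linear parts. That route trades your delicate loop-theoretic computation for a citation of a heavier structural theorem.
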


\begin{thm}[N\v{e}mec, Kepka \cite{13}]\label{quasipara}
 If $(B; f)$ is a paramedial quasigroup then there exists an Abelian group $(B; +)$, such that
$f(x, y)=\varphi(x)+c+\psi(y)$, where $\varphi,\psi\in Aut(B; +)$, $\varphi\varphi=\psi\psi$ and $c\in B$.
\end{thm}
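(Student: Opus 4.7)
The plan is to mimic the classical loop-isotope argument used in the proof of Toyoda's theorem (Theorem~\ref{quasimedial}): pass to a principal loop isotope of $(B;f)$, force it to be an Abelian group by exploiting the paramedial identity, and then read off the linear representation together with the relation $\varphi\varphi=\psi\psi$.

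First, fix an arbitrary element $a\in B$ and set $L_{a}(y)=f(a,y)$, $R_{a}(x)=f(x,a)$; both are bijections by the quasigroup property. Define the principal isotope
\[
x+y \;=\; f\bigl(R_{a}^{-1}(x),\,L_{a}^{-1}(y)\bigr),
\]
so that $f(x,y)=\varphi(x)+\psi(y)$ with $\varphi=R_{a}$ and $\psi=L_{a}$. A direct verification shows that $(B;+)$ is a loop whose identity is $e=f(a,a)$; at this stage $\varphi$ and $\psi$ are only bijections.

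Second, substitute the representation $f(x,y)=\varphi(x)+\psi(y)$ into the paramedial identity~\eqref{par}. Expanding both sides yields
\[
\varphi\bigl(\varphi(x)+\psi(y)\bigr)+\psi\bigl(\varphi(u)+\psi(v)\bigr)
\;=\;
\varphi\bigl(\varphi(v)+\psi(y)\bigr)+\psi\bigl(\varphi(u)+\psi(x)\bigr).
\]
By successively fixing three of the four variables at the loop identity $e$ (and using that the inverses of $\varphi$ and $\psi$ reach every element of $B$), one deduces in turn that $\psi$ and $\varphi$ are endomorphisms of $(B;+)$, hence automorphisms, and that $(B;+)$ is both commutative and associative, i.e., an Abelian group. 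This is the main obstacle: extracting commutativity, associativity, and the endomorphism property from a single balanced identity requires a carefully ordered sequence of substitutions, each chosen to kill exactly one of the four argument slots of the paramedial equation, rather than any single slick step.

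Third, once $(B;+)$ is an Abelian group and $\varphi,\psi\in Aut(B;+)$, the displayed equation simplifies, after cancelling the common summands $\varphi\psi(y)$ and $\psi\varphi(u)$, to
\[
\varphi^{2}(x)+\psi^{2}(v) \;=\; \varphi^{2}(v)+\psi^{2}(x)
\]
for all $x,v\in B$. Specialising $v$ and then $x$ to $e$ and combining the two resulting identities forces $\varphi^{2}=\psi^{2}$. Finally, to recover the claimed form with an arbitrary constant term, compose $\varphi$ and $\psi$ with an appropriate translation of the Abelian group: this replaces $\varphi(x)+\psi(y)$ by $\varphi'(x)+c+\psi'(y)$, where $\varphi',\psi'$ are again automorphisms of $(B;+)$ that still satisfy $(\varphi')^{2}=(\psi')^{2}$, completing the proof.
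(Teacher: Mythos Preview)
The paper does not prove this theorem; it is quoted with a citation to N\v{e}mec and Kepka as a preliminary result. (A common generalisation is established later as Theorem~\ref{main1}, paramediality being the case $f_1=f_2$ of equation~\eqref{2-16}.) So there is nothing to compare against directly, and I comment on your proposal on its own merits and against the technique used in Theorem~\ref{main1}.

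Your strategy is the standard one and is sound in outline, but the second step contains a genuine error: with $\varphi=R_a$ and $\psi=L_a$ you have $\varphi(a)=\psi(a)=e=f(a,a)$, so unless $a$ is idempotent these maps do not fix $e$ and therefore \emph{cannot} be endomorphisms of $(B;+)$. Concretely, take $f(x,y)=x-y$ on $\mathbb{Z}$ (which is paramedial) and $a=1$: then $+$ is ordinary addition, $e=0$, and $\varphi(x)=x-1$ is not additive. What the substitutions you describe actually yield is that $\varphi,\psi$ are \emph{holomorphisms} of the group (automorphisms composed with translations); this is exactly how the paper handles the analogous step in the proof of Theorem~\ref{main1}, via Lemmas~\ref{hol1} and~\ref{hol2}. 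Writing $\varphi(x)=\varphi'(x)+k_1$ and $\psi(y)=k_2+\psi'(y)$ with $\varphi',\psi'\in Aut(B;+)$ then gives $f(x,y)=\varphi'(x)+c+\psi'(y)$ with $c=k_1+k_2$: \emph{this} is the origin of the constant $c$, not the post-hoc translation of your final paragraph, which is superfluous (if $\varphi,\psi$ really were automorphisms, $c=0$ would already suffice). With this correction in place, your third step goes through for $\varphi',\psi'$ and yields $(\varphi')^2=(\psi')^2$.
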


More generaly, considering the following equations with two functional variables, we can define the notion of (para)medial pair of operations:
\begin{align}
\label{medp}f_1(f_2(x,y),f_2(u,v))=f_2(f_1(x,u),f_1(y,v)) ,\\
\label{parp} f_1(f_2(x,y),f_2(u,v))=f_2(f_1(v,y),f_1(u,x)).
\end{align} 

\begin{defn}
A pair $(f_1, f_2)$ of binary operations is called \textit{(para)medial pair of operations}, if the algebra $(B; f_1, f_2)$ satisfies the equation $\eqref{medp}$ ($\eqref{parp}$).
    \end{defn}

\begin{defn}
A binary algebra $\textbf{B}=(B; F)$ is called \textit{(para)medial algebra}, if every pair of operations of the algebra $\textbf{B}$ is (para)medial (or, the algebra $\textbf{B}$ satisfies (para)medial hyperidentity).
    \end{defn}

The following theorem generalizes above results by Toyoda and N\v{e}mec, Kepka:
\begin{thm}[Nazari, Movsisyan \cite{nazari}, Ehsani, Movsisyan \cite{ehsanimovsisyan}]\label{medial}
Let the set $B$, forms a quasigroup under the binary operations
$f_1$ and $f_2$. If the  pair of binary operations $(f_1, f_2)$ is
(para)medial, then there exists a binary operation $'+'$ under which $B$
forms an Abelian group and for arbitrary elements $x,y \in B$ we
have:
\[f_i(x,y)=\varphi_i(x)+\psi_i(y)+c_i,\]
where $c_i$s are fixed elements of $B$, and $\varphi_i, \psi_i \in Aut(B;+)$ for
$i=1,2$, such that: 
\begin{description}
\item
$\varphi_1 \psi_2=\psi_2\varphi_1$, $ \varphi_2\psi_1=\psi_1\varphi_2$, $ \psi_1\psi_2=\psi_2\psi_1$ and $ \ \varphi_1\varphi_2=\varphi_2\varphi_1$ should be satisfied by the medial pair of operations,
\item
$\varphi_1\varphi_2=\psi_2\psi_1$, $ \varphi_2\varphi_1=\psi_1\psi_2$, $ \varphi_1\psi_2=\varphi_2\psi_1$ and $\ \psi_1\varphi_2=\psi_2\varphi_1$ should be satisfied by the paramedial pair of operations.
\end{description}
The group $(B;+)$, is unique up to isomorphisms.
\end{thm}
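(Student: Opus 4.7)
The strategy is to reduce to the single-operation Toyoda/N\v{e}mec--Kepka case (Theorems \ref{quasimedial} and \ref{quasipara}) by constructing an Abelian group $(B,+)$ on $B$ as a principal loop isotope of $f_2$, and then showing that both $f_1$ and $f_2$ become linear over $(B,+)$.

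First, I would fix $a \in B$ and form the principal isotope
\[ x + y = f_2\bigl( R^{-1}(x),\, L^{-1}(y) \bigr), \]
where $L(z) = f_2(a,z)$ and $R(z) = f_2(z,a)$. Standard quasigroup theory makes $(B,+)$ a loop with unit $e = f_2(a,a)$, and we immediately obtain $f_2(x,y) = R(x) + L(y)$, which, after centring, is already of the form $\varphi_2(x) + \psi_2(y) + c_2$ for suitable bijections $\varphi_2, \psi_2$ and constant $c_2$.

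Second, and this is the crux of the argument, I would show that $(B,+)$ is an Abelian group. The idea is to specialise various subsets of $\{x,y,u,v\}$ to $a$ in \eqref{medp} (resp.\ \eqref{parp}); the outer $f_2$-applications then collapse into $+$ together with translations by $a$, while quasigroup cancellation on the $f_1$-applications isolates identities involving $+$ alone. Running through the cases where pairs of variables are set to $a$ yields both the commutativity and the associativity of $+$, completing the Abelian group structure via the loop axiom.

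Third, with $(B,+)$ now an Abelian group, a further use of the pair equation verifies that $\varphi_2, \psi_2$ lie in $Aut(B;+)$; solving \eqref{medp} (resp.\ \eqref{parp}) for $f_1$ --- exploiting the quasigroup property of $f_1$ and the bijectivity of $\varphi_2, \psi_2$ --- then yields $f_1(x,y) = \varphi_1(x) + \psi_1(y) + c_1$ with $\varphi_1, \psi_1 \in Aut(B;+)$. Substituting both linear forms back into \eqref{medp} (resp.\ \eqref{parp}) and comparing coefficients of the four independent variables produces the listed commutation relations. Uniqueness of $(B,+)$ up to isomorphism follows from the standard fact that any two principal loop isotopes of a quasigroup are isotopic, and in the Abelian-group setting such an isotopy reduces to a group isomorphism.

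The main obstacle is the second step: extracting single-operation structure (commutativity and associativity of $+$) from an equation that genuinely couples $f_1$ and $f_2$. Unlike in Toyoda's classical proof, mediality of a single operation is not available, and one must exploit the interplay of $f_1$ and $f_2$ via several substitutions and careful quasigroup cancellation. In the paramedial case there is an added asymmetry (the first and fourth arguments of \eqref{parp} are swapped), so the substitutions and resulting constraints on the $\varphi_i, \psi_i$ differ visibly from the medial case, though the overall strategy is identical.
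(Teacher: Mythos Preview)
The paper does not itself prove Theorem~\ref{medial}: it is quoted from \cite{nazari,ehsanimovsisyan} as a preliminary result, and the paper's own contribution is the common generalisation in Theorem~\ref{main1}. So there is no ``paper's proof'' of this exact statement to compare against.

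That said, the paper's proof of the generalisation takes a markedly different route from yours. Rather than forming a principal loop isotope of $f_2$ and arguing by hand that it is an Abelian group, the paper observes that each balanced equation~(4.j) --- of which the medial and paramedial pair equations are \eqref{2-1} and \eqref{2-16} --- is an instance of a generalised equation with Krsti\'c graph $K_{3,3}$, and invokes the machinery of \cite{krapezZivkovic} (equivalently, for \eqref{2-1}, the Acz\'el--Belousov--Hossz\'u Theorem~\ref{ABH}) to obtain at once an Abelian group $(B,+)$ to which $f_1,f_2$ are principally isotopic, say $f_i(x,y)=\lambda_i x+\rho_i y$. From there the paper shows that each $\lambda_i,\rho_i$ is a holomorphism of $(B,+)$ (via Lemma~\ref{hol1}) and hence, by Lemma~\ref{hol2}, of the form $x\mapsto\alpha_i x + k$ with $\alpha_i\in Aut(B;+)$; substituting back and setting variables to $0$ then yields the listed commutation relations, and Albert's theorem gives uniqueness.

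Your plan is viable in outline, but your ``second step'' is precisely where the paper outsources the work to Theorem~\ref{ABH}. You are right that it is the crux: extracting associativity and commutativity of the principal isotope of $f_2$ from the two-operation identity \eqref{medp} or \eqref{parp} by substitutions alone amounts to a hand-made proof of the relevant case of Acz\'el--Belousov--Hossz\'u, and ``running through the cases where pairs of variables are set to $a$'' under-sells the manipulation required --- one first has to express $f_1$ through $+$ via several specialisations, then feed those expressions back into \eqref{medp} to isolate a pure $+$-identity. The paper's approach buys brevity by citing the black box; yours buys self-containment at the cost of reproving work already in the literature.
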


The following results will be frequently utilized.
\begin{thm}[Acz\'{e}l, Belousov, Hossz\'{u} \cite{1}, see also \cite{Belousov65}]\label{ABH}
Let the set $B$ forms a quasigroup under six operations $A_i(x,y)$
(for $i=1,\dots,6$). If these operations satisfy the following equation:
\begin{equation}\label{t1}
A_1(A_2(x,y),A_3(u,v))=A_4(A_5(x,u),A_6(y,v)),
\end{equation}
for all elements $x$, $y$, $u$ and $v$ of the set $B$ then there exists an
operation '$+$' under which $B$ forms an abelian group isotopic to all these six quasigroups. And there exist eight
permutations $\alpha$, $\beta$, $\gamma$, $\delta$, $\epsilon$,
$\psi$, $\varphi$, $\chi$ of $B$ such that:
\begin{eqnarray*}
&&A_1(x,y)=\delta x+\varphi y,\\
&&A_2(x,y)=\delta^{-1}(\alpha x+\beta y),\\
&&A_3(x,y)=\varphi^{-1}(\chi x+\gamma y),\\
&&A_4(x,y)=\psi x+\epsilon y,\\
&&A_5(x,y)=\psi^{-1}(\alpha x+\chi y),\\
&&A_6(x,y)=\epsilon^{-1}(\beta x+\gamma y).
\end{eqnarray*}
\end{thm}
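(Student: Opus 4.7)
My plan is to use the quasigroup cancellation property in \eqref{t1} to express each $A_i$ as a principal isotope of one common loop operation $\oplus$, then derive from \eqref{t1} that $\oplus$ is medial, and finally invoke Toyoda's theorem (Theorem~\ref{quasimedial}) to obtain the abelian group $(B;+)$ together with the claimed representation.

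First, fix an element $e \in B$ and write $A_1$ in its standard loop-isotope form $A_1(x,y) = \delta x \oplus \varphi y$, where $\delta, \varphi$ are the bijections obtained from fixing one argument of $A_1$ at $e$ and $\oplus$ is a loop operation on $B$. Do the same for $A_4$ with outer bijections $\psi, \epsilon$. Substituting into \eqref{t1} and iteratively specializing each of $x, y, u, v$ to $e$ while cancelling via the quasigroup property on both sides, one extracts four bijections $\alpha, \beta, \gamma, \chi$ together with the decompositions
\[ \delta A_2(x,y) = \alpha x \oplus \beta y, \qquad \varphi A_3(u,v) = \chi u \oplus \gamma v, \]
\[ \psi A_5(x,u) = \alpha x \oplus \chi u, \qquad \epsilon A_6(y,v) = \beta y \oplus \gamma v, \]
and the loop identity
\[ (\alpha x \oplus \beta y) \oplus (\chi u \oplus \gamma v) = (\alpha x \oplus \chi u) \oplus (\beta y \oplus \gamma v). \]

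Since $\alpha, \beta, \gamma, \chi$ are bijections of $B$, the last identity is precisely the medial law for the loop $(B;\oplus)$. A medial loop is an abelian group (this follows from Theorem~\ref{quasimedial} applied to $(B;\oplus)$: since $\oplus$ has a unit, the two automorphisms in the Toyoda representation are forced to be the identity and the constant is absorbed), so writing $\oplus$ as $+$ and reading off each of the formulas above yields exactly the six identities of the theorem.

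The main technical obstacle is in the first step: one must ensure that the loop obtained from $A_1$ is the same as the one obtained from $A_4$, and that the same bijection $\alpha$ appears inside both $A_2$ and $A_5$, the same $\beta$ inside both $A_2$ and $A_6$, and so on. This rigidity is enforced by applying the quasigroup cancellation laws to \eqref{t1} after each specialization: whenever a given variable is specialized to $e$ on both sides in corresponding positions, cancelling the surrounding bijective evaluations of $A_1$ and $A_4$ forces the two a priori distinct bijections in question to coincide. Once this bookkeeping is complete, the reduction to the medial identity and the application of Toyoda's theorem are straightforward.
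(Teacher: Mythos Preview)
The paper does not prove Theorem~\ref{ABH}: it is quoted as a classical result of Acz\'{e}l, Belousov and Hossz\'{u} with references to \cite{1} and \cite{Belousov65}, and is used only as a tool in the later arguments. There is therefore no ``paper's own proof'' to compare your attempt against.

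That said, your outline is the standard route to this theorem and is essentially sound. One point of phrasing deserves correction: you should not independently write $A_4(x,y)=\psi x\oplus'\epsilon y$ for a loop $\oplus'$ produced from $A_4$ and then argue that $\oplus'=\oplus$. Rather, after fixing the loop decomposition $A_1(x,y)=\delta x\oplus\varphi y$, substitute it into \eqref{t1} and specialise pairs of variables to force $\delta A_2$, $\varphi A_3$ into the form $\alpha x\oplus\beta y$, $\chi u\oplus\gamma v$; then \eqref{t1} itself expresses $A_4(A_5(x,u),A_6(y,v))$ as $(\alpha x\oplus\beta y)\oplus(\chi u\oplus\gamma v)$, and further specialisation reads off $\psi,\epsilon$ and the forms of $A_5,A_6$ \emph{in terms of the same $\oplus$}. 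With that adjustment the ``same loop'' obstacle you flag disappears automatically, and the remaining steps (mediality of $\oplus$, hence $\oplus$ is an abelian group by Theorem~\ref{quasimedial} since a medial loop has trivial Toyoda data) go through as you describe.
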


\begin{thm}[Krape\v{z}, \cite{krapez1981}]\label{krapez theorem}
If the set $B$ forms a quasigroup under four operations $A_i(x,y)$ (for $i=1,\ldots,4$) and if these operations satisfy the equation of  generalized transitivity:
\[A_1(A_2(x,y),A_3(y,z))=A_4(x,z),\]
for all elements $x, y, z \in B$, then there exists an operation $'+'$ under which $B$ forms a group isotopic to all these  quasigroups and there exist permutaions $\alpha$, $\beta$, $\gamma$, $\delta$, $\epsilon$, $\psi$, $\varphi$, $\chi$ of $B$ such that
\begin{align*}
&A_1(x,y)=\alpha x+\beta y,\\
&A_2(x,y)=\alpha^{-1}(\alpha \gamma x+\alpha \delta y),\\
&A_3(x,y)=\beta^{-1}(\beta \epsilon x+\beta \psi y),\\
&A_4(x,y)=\varphi x+\chi y.
\end{align*}
\end{thm}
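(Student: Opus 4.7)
The plan is to identify the structure of $A_1$ via the autotopies implicit in generalized transitivity, then derive the forms of $A_2, A_3, A_4$. First, fixing $y = y_0$, the quasigroup property of $A_2, A_3$ gives bijections $\pi = A_2(\cdot, y_0)$ and $\rho = A_3(y_0, \cdot)$ of $B$, and the equation yields $A_4(x, z) = A_1(\pi x, \rho z)$. Substituting back, the main equation takes the form
\[
A_1(A_2(x, y), A_3(y, z)) = A_1(\pi x, \rho z) \qquad (x, y, z \in B).
\]
Letting $\pi_y = A_2(\cdot, y)$ and $\rho_y = A_3(y, \cdot)$ (bijections for each $y$), a standard change of variable shows that $(\pi_y \pi^{-1}, \rho_y \rho^{-1}, \iota)$ is a principal autotopy of $A_1$ for every $y \in B$.

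Next, I would use this parametric family of autotopies to conclude, by a classical coordinatization theorem of Belousov on isotopies of quasigroups (see \cite{Belousov}), that $A_1$ is isotopic to a group. The key observation is that for fixed $u_0$ the value $(\pi_y \pi^{-1})(u_0) = A_2(\pi^{-1} u_0, y)$ ranges over all of $B$ as $y$ varies, and likewise in the second coordinate; this transitive behavior of the autotopy family suffices to force the group isotopy. Consequently, there is a group operation $+$ on $B$ and bijections $\alpha, \beta$ with $A_1(x, y) = \alpha x + \beta y$ (any additive constant can be absorbed into $\alpha$ or $\beta$).

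Finally, substituting $A_1(x, y) = \alpha x + \beta y$ into the main equation gives
\[
\alpha A_2(x, y) + \beta A_3(y, z) = A_4(x, z).
\]
Because $A_4$ does not depend on $y$, the $y$-dependent parts on the left must cancel. Writing $A_2$ and $A_3$ as general principal isotopes of $(B, +)$, namely $A_2(x, y) = \alpha^{-1}(\alpha \gamma x + \alpha \delta y)$ and $A_3(y, z) = \beta^{-1}(\beta \epsilon y + \beta \psi z)$, and imposing $\alpha \delta y + \beta \epsilon y = 0$ for every $y$ forces the precise relation between $\delta$ and $\epsilon$. One then reads off $A_4(x, z) = \alpha \gamma x + \beta \psi z$, matching the stated form with $\varphi = \alpha \gamma$ and $\chi = \beta \psi$; the quasigroup property of each $A_i$ guarantees that $\gamma, \delta, \epsilon, \psi$ are themselves permutations.

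The main obstacle is the second step: extracting a group structure from the parametric autotopy data for $A_1$. While a rich family of autotopies with identity third component is structurally very strong, converting it to an explicit group operation requires careful invocation of Belousov-type coordinatization together with a loop-to-group reduction. The last step is essentially algebraic bookkeeping once the group is in hand.
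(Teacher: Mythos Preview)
The paper does not prove this theorem; it is cited from \cite{krapez1981} as a known tool and invoked without argument in the proof of Lemma~\ref{intermedial}. There is therefore no proof in the paper to compare your proposal against.

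Taken on its own, your outline is sound. Deriving the one-parameter family of principal autotopies $(\pi_y\pi^{-1},\rho_y\rho^{-1},\iota)$ of $A_1$ is correct, and you rightly identify the passage from this transitive autotopy family to a group isotope as the only substantive step. That step is a genuine theorem (passing to a loop isotope, each such autotopy has a right translation as first component and a left translation as second; transitivity of the family then forces associativity), but it requires a precise statement rather than an appeal to ``coordinatization'', and your reference \cite{Belousov} is the monograph on algebraic nets---the relevant material is in \cite{26} or \cite{Belousov65}. In the final step you write $A_2$ and $A_3$ as principal isotopes of $+$ without first establishing that they are isotopic to $+$; this is easily repaired by fixing $z$ (respectively $x$) in $\alpha A_2(x,y)+\beta A_3(y,z)=A_4(x,z)$ and reading off the isotopy directly.
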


\begin{thm}[Krape\v{z} \cite{krapez 79}, Belousov \cite{Belousov85}]\label{Krap,Bel}
A quasigroup satisfying a balanced but not Belousov equation is isotopic to a group.
\end{thm}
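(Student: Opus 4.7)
The plan is to exploit the failure of the Belousov property to reduce $s = t$, by substituting constants for suitably chosen object variables, to one of the canonical four--variable schemata covered by Theorem~\ref{ABH} or Theorem~\ref{krapez theorem}. Those theorems then produce an (Abelian) group structure on $B$ with respect to which every principal isotope of $f$ is linear, and hence $(B;f)$ itself is isotopic to a group.

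First I would pick a witness of non--Belousov--ness: a subterm $p$ of, say, $s$ such that no subterm of $t$ has content equal to $V = var(p)$. Because $s = t$ is balanced, the variables in $V$ occur exactly once on each side, so the failure of the Belousov property means that on the $t$--side the variables of $V$ are interleaved with variables from outside $V$ at every syntactic level. This forces, at the top of $t$, a splitting in which two disjoint nonempty subsets of $V$ end up in different top subterms together with variables outside $V$, producing a genuine ``crossing'' pattern. Keeping four representative variables $x,y,u,v$ free (one per crossing class) and substituting fixed constants for every other variable, the remaining occurrences of $f$ become principal isotopes $A_i$ of $f$, and the equation collapses to
\eql{aimed1}{A_1(A_2(x,y),A_3(u,v)) = A_4(A_5(x,u),A_6(y,v))}
or, after further identifying two of the four free variables along a chain produced by the substitution, to
\eql{aimed2}{A_1(A_2(x,y),A_3(y,z)) = A_4(x,z).}
Applying Theorem~\ref{ABH} to \eqref{aimed1}, respectively Theorem~\ref{krapez theorem} to \eqref{aimed2}, yields an (Abelian) group $(B;+)$ and linear representations $A_i(x,y) = \varphi_i x + a_i + \psi_i y$. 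Since $f$ differs from one of the $A_i$ only by the fixed left/right translations used to absorb the substituted constants, $f$ is itself principally isotopic to $(B;+)$, which is what the theorem asserts.

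The main obstacle is the combinatorial reduction step: for an arbitrary balanced non--Belousov equation, one must show that a substitution of the above shape actually exists and that the resulting four--variable equation is \emph{non--degenerate}, i.e.\ not trivially satisfied because of overspecialization, and has the shape of \eqref{aimed1} or \eqref{aimed2} with each $A_i$ a bona fide quasigroup operation. The standard route is an induction on the combined length of $s$ and $t$, splitting on the top operation symbols of $s$ and $t$ and on whether the Belousov--violating subterm $p$ is a maximal subterm or lies strictly inside one; the induction hypothesis disposes of proper subequations, while the base cases require a direct verification that the minimal balanced non--Belousov equations fit one of the two schemata. Controlling this case analysis, rather than any analytic step, is the delicate part of the proof.
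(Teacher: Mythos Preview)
The paper does not contain a proof of this theorem at all: it is stated with attribution to Krape\v{z} \cite{krapez 79} and Belousov \cite{Belousov85} and used as a background result, without argument. There is therefore nothing in the paper to compare your attempt against.

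That said, your sketch is broadly in the spirit of the original proofs: both Krape\v{z} and Belousov proceed by isolating a ``crossing'' subconfiguration witnessing the failure of the Belousov property and then specializing variables to constants so as to obtain a reduced equation to which a known group--isotopy theorem applies. Your identification of Theorems~\ref{ABH} and~\ref{krapez theorem} as the target schemata is appropriate, and you are right that the substance of the argument lies entirely in the combinatorial reduction, not in any analytic step. However, as written your proposal is only a plan: the claim that one can always land on \eqref{aimed1} or \eqref{aimed2} with genuine quasigroup operations $A_i$ is asserted, not proved, and the inductive scheme you outline would need to be made precise (in particular, one must control how the substituted constants interact with the nested operation symbols so that each resulting $A_i$ is a principal isotope of $f$ rather than a constant or a projection). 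If you intend to supply a self--contained proof rather than a citation, that reduction must be carried out in detail; otherwise, citing \cite{krapez 79} and \cite{Belousov85} as the paper does is the standard practice.
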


\begin{thm}[Krape\v{z}, Taylor \cite{krapezTaylor95}]\label{KrapTay}
A quasigroup satisfying a quadratic but not gemini equation is isotopic to a group.
\end{thm}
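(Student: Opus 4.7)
The plan is to split the proof into two cases---balanced and non-balanced---and reduce each to a theorem already stated. Before splitting, I would pass to the \emph{generalized} version of $s = t$: if an operation symbol appears multiple times, I would relabel its occurrences by distinct functional variables $f_1, f_2, \ldots$, since the common interpretation $f_i := f$ shows that the generalized equation is satisfied by $f$, and any isotopy conclusion obtained for the $f_i$ transfers to $f$. Hence it suffices to treat generalized quadratic non-gemini equations.

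\emph{Case 1 (balanced).} Since $s = t$ is quadratic and balanced, every variable occurs once on each side. My approach here is to verify the combinatorial fact that a balanced Belousov equation is automatically gemini: balance forces each variable to cross sides, and the subterm--variable matching condition defining Belousov provides enough symmetry that $s = t$ becomes derivable from the $TS$-loop identities (the paradigm cases being commutativity, the trivial identity, the $4$-palindromic identity, and \eqref{00}). Contrapositively, the balanced non-gemini equation is non-Belousov, so Theorem \ref{Krap,Bel} applies and $f$ is isotopic to a group.

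\emph{Case 2 (not balanced).} Then $s = t$ contains at least one left or right quadratic variable $x$, appearing twice on a single side. By substituting constants for suitably chosen linear variables and peeling off outer layers using quasigroup cancellation, I would extract a consequence equation of $s = t$ whose shape falls under either Theorem \ref{ABH} (the Acz\'el--Belousov--Hossz\'u equation \eqref{t1}) or Theorem \ref{krapez theorem} (generalized transitivity $A_1(A_2(x,y),A_3(y,z)) = A_4(x,z)$). Either theorem then delivers an isotopy of $f$ onto a group.

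The main obstacle will be the combinatorial case analysis in Case 2: the number of quadratic variables, whether they all sit on one side or are split between the two sides, and their nesting depths in the term trees admit many sub-configurations, and one needs a uniform procedure turning each such shape into a transitivity- or medial-type consequence. Checking that the required substitutions are always legitimate by quasigroup bijectivity, and that the induced operations on the residual variables remain quasigroup operations so that Theorems \ref{ABH} and \ref{krapez theorem} are applicable, will be the technical heart of the argument.
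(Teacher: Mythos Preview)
The paper does not prove Theorem~\ref{KrapTay}; it is stated in Section~2 as a preliminary result attributed to Krape\v{z} and Taylor \cite{krapezTaylor95} and is invoked without proof. There is therefore no proof in the paper to compare your proposal against.

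Regarding the proposal itself: it is a reasonable high-level outline but not yet a proof. In Case~1 you rely on the implication ``balanced Belousov $\Rightarrow$ gemini,'' which you assert but do not justify; this equivalence is true but requires its own argument (roughly, that the subterm-matching condition lets one rewrite $s$ into $t$ using only commutativity and the Steiner involution, both valid in $TS$-loops). In Case~2 you correctly identify that the entire substance of the theorem lives in the combinatorial reduction, but you give no mechanism for carrying it out. The original Krape\v{z}--Taylor argument does not proceed by ad hoc substitutions toward Theorems~\ref{ABH} or~\ref{krapez theorem}; it uses the Krsti\'c graph classification of generalized quadratic equations up to parastrophic equivalence, reducing the problem to a finite list of $3$-connected cubic graphs and handling each. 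Your proposed route---specialize variables and peel off layers until a transitivity or mediality shape appears---is not guaranteed to succeed uniformly: some configurations require parastrophic changes of the operations (not just object substitutions) before any such shape emerges, and nothing in your outline controls this.
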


\section{Parastrophically uncancellable quadratic equations with two function variables}

We consider parastrophically uncancellable quadratic quasigroup equations of the form:
\eqt{Eq}{f_1(f_2(x_1,x_2),f_2(x_3,x_4))=f_2(f_1(x_5,x_6),f_1(x_7,x_8))}
where $x_i\in \{x,y,u,v\}$, for $i=1,\ldots,8$. 
Therefore, the equation (Eq) is quadratic level quasigroup equation with four (object) variables each appearing twice in the equation and with two function variables each appearing three times in the equation. 
There are $48$ such equations and we attempt to solve them all.

There is a correspondence between generalized quadratic quasigroup equations and connected cubic graphs, namely Krsti\'{c} graphs. 
Two such equations are parastrophically equivalent  iff they have the same (i.e. isomorphic) Krsti\'{c} graphs. 
Furthermore, an equation is parastrophically uncancellable iff the corresponding Krsti\'c graph is 3--connected. For more detailed account of this correspondence see \cite{krapezZivkovic}, \cite{krapezTaylor95} and \cite{KrsticDr}.

For every one of the $48$ equations (Eq) there is a corresponding generalized equation:
\eqt{GEq}{f_1(f_3(x_1,x_2),f_4(x_3,x_4))=f_2(f_5(x_5,x_6),f_6(x_7,x_8))}
(where $x_i\in \{x,y,u,v\}$, for $i=1,\ldots,8$)
with the appropriate Krsti\'{c} graph. This Krsti\'{c} graph \emph{will be assumed} to be the Krsti\'{c} graph of (Eq) as well.
All these equations can be partitioned into two classes, depending on their Krsti\'{c} graphs, as follows:

\begin{description}
\item[-] $16$ balanced (and non-Belousov) equations with the Krsti\'{c} graph $K_{3,3}$,
\item[-] $32$ non--balanced non-gemini equations with the Krsti\'{c} graph $P_{3}$.
\end{description}

\begin{center}
$\begin{tikzpicture}
	\vertex[fill] (1) at (1, 3) {};
	\vertex[fill] (2) at (0, 2) {};
	\vertex[fill] (3) at (0, 1) {};
	\vertex[fill] (4) at (1, 0) [label=below:$K_{3,3}$]{};
	\vertex[fill] (5) at (2, 1) {};
	\vertex[fill] (6) at (2, 2) {};
\path 
 	(1) edge (2)
	(1) edge (6)
	(1) edge (4) 		
	(2) edge (3)
 	(2) edge (5)
 	(3) edge (4)
	(3) edge (6)
 	(4) edge (5)
 	(5) edge (6)
	;
\end{tikzpicture}  \qquad \qquad$ \ \  \ \ 
 \ \  \ \ 
$ \qquad \begin{tikzpicture}
	\vertex[fill] (1) at (2, 3) {};
	\vertex[fill] (2) at (0, 3) {};
	\vertex[fill] (3) at (0, 0) {};
	\vertex[fill] (4) at (2, 0) {};
	\vertex[fill] (5) at (1, 2) {};
	\vertex[fill] (6) at (1, 1) {};
\path 
	(1) edge (2) 	
	(1) edge (5)
	(1) edge (4)
	(2) edge (3)
 	(2) edge (5)
 	(3) edge node[below]{$P_{3}$}(4) 
	(3) edge (6)
 	(4) edge (6)
 	(5) edge (6)
	;
\end{tikzpicture}$
\end{center}

To characterize a pair of quasigroup operations which satisfies a non--Belousov balanced functional equation, we need the notion of $\Lbranch \; (\Rbranch)$ and the following properties of holomorphisms which were proved for Muofang loops in \cite{10}.

\begin{defn}
\label{branches}
Let $t$ be a term and $x$ a variable. We define:
\begin{itemize}
\item If $x\notin var(t)$, then $\Lbranch(x,t) \;\; (\Rbranch(x,t))$ is not defined,
\item $\Lbranch(x,x)= \Lambda \;\; (\Rbranch(x,x)= \Lambda)$ \; ($\Lambda$ is the empty word),
\item If $t=f_i(t_1, t_2)$ and there is no occurrence of $x$ in $t_2$ then
        $\Lbranch(x,t)=\alpha_i \Lbranch(x,t_1) \;\; (\Rbranch(x,t)=\alpha_i \Rbranch(x,t_1))$\/,
\item If $t=f_i(t_1, t_2)$ and there is no occurrence of $x$ in $t_1$ then
        $\Lbranch(x,t)=\gb_i \Lbranch(x,t_2) \;\; (\Rbranch(x,t)=\gb_i \Rbranch(x,t_2))$\/,
\item If $t=f_i(t_1, t_2)$ and $x$ occurrs in both $t_1$ and $t_2$ then
        $\Lbranch(x,t)=\alpha_i \Lbranch(x,t_1) \;\; (\Rbranch(x,t)=\beta_i \Rbranch(x,t_2))$\/,
\item $
        \Lbranch(x,s=t) = 
        \begin{cases}
        \Lbranch(x,s),  & \text{if } x \in var(s) \\
        \Lbranch(x,t), & \text{otherwise,}
        \end{cases}
        $\/,
\item $
        \Rbranch(x,s=t) = 
        \begin{cases}
        \Rbranch(x,t),  & \text{if } x \in var(t) \\
        \Rbranch(x,s), & \text{otherwise,}
        \end{cases}
        $\/.
\end{itemize}
\end{defn}
%
%
\begin{lem}\label{hol1}
Let the identity:
\[
\alpha_1(x+ y)=\alpha_2(x)+\alpha_3(y).
\]
be satisfied for bijections $\alpha_1, \alpha_2, \alpha_3$ on the group  $(B;+)$\/.
Then $\alpha_1, \alpha_2,
 \alpha_3\in Hol(B;+)$.
\end{lem}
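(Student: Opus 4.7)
\emph{Plan of proof.} The strategy is to reduce all three bijections to a single automorphism of $(B;+)$ plus translations, and then to invoke the fact that $\Hol(B;+)$ contains every automorphism and every left/right translation, and is closed under composition.

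First I would specialise the hypothesis at $y=0$ and at $x=0$ to obtain
\[
\ga_2(x)=\ga_1(x)-\ga_3(0),\qquad \ga_3(y)=-\ga_2(0)+\ga_1(y),
\]
together with $\ga_1(0)=\ga_2(0)+\ga_3(0)$. Substituting these two formulas back into the original identity yields
\[
\ga_1(x+y)=\ga_1(x)-\ga_3(0)-\ga_2(0)+\ga_1(y).
\]
Setting $\gg(x)=\ga_1(x)-\ga_1(0)$ gives a bijection (right translation is bijective) with $\gg(0)=0$, and a short calculation---using only the cancellation $-\ga_3(0)-\ga_2(0)+\ga_2(0)+\ga_3(0)=0$, valid in any group---yields $\gg(x+y)=\gg(x)+\gg(y)$. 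Hence $\gg$ is an automorphism of $(B;+)$ and $\ga_1(x)=\gg(x)+\ga_1(0)$.

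To conclude, I would verify by a one-line direct computation (using $(u+v)^{-1}=-v-u$) that any map of the form $x\mapsto\varphi(x)+d$ or $x\mapsto d+\varphi(x)$ with $\varphi$ an automorphism of $(B;+)$ satisfies the holomorphism identity $\alpha(x+y^{-1}+z)=\alpha(x)+\alpha(y)^{-1}+\alpha(z)$. This immediately handles $\ga_1$ and $\ga_2(x)=\gg(x)+\ga_2(0)$, while for $\ga_3(y)=-\ga_2(0)+\gg(y)+\ga_2(0)+\ga_3(0)$ the inner expression $-\ga_2(0)+\gg(\cdot)+\ga_2(0)$ is a conjugate of $\gg$, hence again an automorphism, so $\ga_3$ too is a translate of an automorphism.

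The main subtlety---rather than a genuine obstacle---is the non-Abelian case: one must keep the order of summands straight throughout, but commutativity is nowhere invoked, so the argument is uniform. The only real work is the bookkeeping verification that right translates (and left translates, after conjugation) of an automorphism lie in $\Hol(B;+)$; once that lemma is in place, the present statement follows at once from the reduction above.
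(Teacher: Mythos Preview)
Your argument is correct and self-contained. Specialising at $y=0$ and $x=0$, deducing that $\gg(x)=\ga_1(x)-\ga_1(0)$ is an automorphism (using $\ga_1(0)=\ga_2(0)+\ga_3(0)$ so that $-\ga_3(0)-\ga_2(0)=-\ga_1(0)$), and then checking directly that any translate of an automorphism satisfies the holomorphism identity all go through exactly as you outline; the handling of $\ga_3$ via the inner automorphism $y\mapsto -\ga_2(0)+\gg(y)+\ga_2(0)$ is the right way to stay honest in the non-Abelian case.

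As for comparison with the paper: the paper does not actually prove this lemma. It is stated without proof, with the remark that these ``properties of holomorphisms \dots\ were proved for Moufang loops in \cite{10}''. So your write-up in fact supplies more than the paper does. It is worth noting that what you establish along the way---that each $\ga_i$ is a (left or right) translate of an automorphism---is precisely the content of the paper's next lemma (Lemma~\ref{hol2}) read in the converse direction; in effect you are proving the easy half of Lemma~\ref{hol2} and combining it with the reduction to obtain Lemma~\ref{hol1}. That is a clean and natural route, and nothing is missing.
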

\begin{lem}\label{hol2}
Every holomorphism $\alpha$ of the group $(B;+)$  has the
following forms:
\[
\alpha x=\varphi_1 x+ k_1, \ \ \ \ \  \alpha x=k_2+\varphi_2 x,
\]
where $\varphi_1, \varphi_2\in Aut(B;+)$ and $k_1, k_2\in B$.
\end{lem}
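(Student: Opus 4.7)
The plan is to extract both forms of $\alpha$ from a single consequence of the defining identity of a holomorphism. First, I would substitute $y = 0$ (the identity element of $(B;+)$) into $\alpha(x + y^{-1} + z) = \alpha x + (\alpha y)^{-1} + \alpha z$, which collapses it to $\alpha(x+z) = \alpha x + k^{-1} + \alpha z$, where $k := \alpha 0$. This one relation will carry all the content of the lemma.

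For the first form, I would set $k_1 := k$ and define $\varphi_1 x := \alpha x + k^{-1}$, so that $\alpha x = \varphi_1 x + k_1$ holds by construction. Verifying $\varphi_1(x+z) = \varphi_1 x + \varphi_1 z$ is then a direct calculation from the preceding relation, using only associativity in $(B;+)$ and the identity $k + k^{-1} = 0$. Bijectivity of $\varphi_1$ is immediate since $\alpha$ is a bijection and right translation by $k^{-1}$ is a bijection of $B$, so $\varphi_1 \in Aut(B;+)$.

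For the second form I would instead define $\varphi_2 x := k^{-1} + \alpha x$ and take $k_2 := k$, so that $\alpha x = k_2 + \varphi_2 x$ holds by construction. The verification that $\varphi_2$ is a homomorphism is the mirror image of the previous one, with $k^{-1}$ absorbed on the left rather than on the right, and its bijectivity again follows from that of $\alpha$ combined with a translation.

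The main obstacle is mild but genuine: since $(B;+)$ is not assumed Abelian, the two forms are really distinct and the two homomorphism checks must be carried out separately, with care taken not to move $k$ or $k^{-1}$ past $\alpha x$. Choosing the substitution $y = 0$ at the very start is what makes this tractable, since it reduces the three-term defining identity to a single two-term identity that is easy to manipulate by hand.
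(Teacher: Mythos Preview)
Your argument is correct. Substituting $y = 0$ into the holomorphism identity indeed yields $\alpha(x+z) = \alpha x + (\alpha 0)^{-1} + \alpha z$, and from this your two definitions $\varphi_1 x := \alpha x + k^{-1}$ and $\varphi_2 x := k^{-1} + \alpha x$ (with $k = \alpha 0$) give bijective homomorphisms by straightforward associativity manipulations, exactly as you describe. Your remark about keeping track of sides in the non--Abelian setting is well taken and correctly handled.

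As for comparison: the paper does not actually supply a proof of this lemma. It is stated without proof, with a reference to \cite{10} where the result is established (in the broader context of Moufang loops). So there is nothing in the paper to compare your approach against; your write--up would in fact fill a gap the authors chose to leave to the literature.
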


%
\section{Equations with Krsti\'c graph $K_{3,3}$}

The class of non--gemini balanced (and therefore non--Belousov) quadratic functional equations consists of the following $16$  equations with four object variables $x,y,u,v$ and two quasigroup operations $f_1$, $f_2$:
\begin{align}
\label{2-1}f_1(f_2(x,y),f_2(u,v))=f_2(f_1(x,u),f_1(y,v))\\
\label{2-2}f_1(f_2(x,y),f_2(u,v))=f_2(f_1(x,u),f_1(v,y))\\
\label{2-3}f_1(f_2(x,y),f_2(u,v))=f_2(f_1(x,v),f_1(y,u))\\
\label{2-4}f_1(f_2(x,y),f_2(u,v))=f_2(f_1(x,v),f_1(u,y))\\
\label{2-5}f_1(f_2(x,y),f_2(u,v))=f_2(f_1(y,u),f_1(x,v))\\
\label{2-6}f_1(f_2(x,y),f_2(u,v))=f_2(f_1(y,u),f_1(v,x))\\
\label{2-7}f_1(f_2(x,y),f_2(u,v))=f_2(f_1(y,v),f_1(x,u))\\
\label{2-8}f_1(f_2(x,y),f_2(u,v))=f_2(f_1(y,v),f_1(u,x))\\
\label{2-9}f_1(f_2(x,y),f_2(u,v))=f_2(f_1(u,x),f_1(y,v))\\
\label{2-10}f_1(f_2(x,y),f_2(u,v))=f_2(f_1(u,x),f_1(v,y))\\
\label{2-11}f_1(f_2(x,y),f_2(u,v))=f_2(f_1(u,y),f_1(x,v))\\
\label{2-12}f_1(f_2(x,y),f_2(u,v))=f_2(f_1(u,y),f_1(v,x))\\
\label{2-13}f_1(f_2(x,y),f_2(u,v))=f_2(f_1(v,x),f_1(y,u))\\
\label{2-14}f_1(f_2(x,y),f_2(u,v))=f_2(f_1(v,x),f_1(u,y))\\
\label{2-15}f_1(f_2(x,y),f_2(u,v))=f_2(f_1(v,y),f_1(x,u))\\
\label{2-16}f_1(f_2(x,y),f_2(u,v))=f_2(f_1(v,y),f_1(u,x))
\end{align}

The following result generalizes, on the one hand the Theorem \ref{medial}, and on the other, the results from and immediately after the Example 7 in \cite{krapez2005}.
\begin{thm}
\label{main1}
Let the balanced non--Belousov quasigroup equations \emph{(4.j)  (j = 1,\dots,16)} have the Krsti\'c graph $K_{3,3}$\/.
A general solution of any of \emph{(4.j)} is given by:
\eql{sol4}{f_i(x,y) = {\ga}_i x + c_i + {\gb}_i y \quad (i = 1,2)}
where:
\begin{itemize}
\item $(B;+)$ is an arbitrary Abelian group,
\item $c_1, c_2$ are arbitrary elements of $B$ such that $f_1(c_2,c_2) = f_2(c_1,c_1)$\/,
\item ${\ga}_i, {\gb}_i \; (i = 1,2)$ are arbitrary automorphisms of + such that:
        \eql{lin4}{\Lbranch(z, \emph{(4.j)}) = \Rbranch(z, \emph{(4.j)})}
        for all variables $z$ of the equation \emph{(4.j)}.
\end{itemize}
The group $(B;+)$ is unique up to isomorphism.
\end{thm}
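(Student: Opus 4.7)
I would prove the theorem in two directions. Sufficiency is a direct substitution: with $f_i(x,y) = \alpha_i x + c_i + \beta_i y$ over an Abelian group $(B;+)$, each side of (4.j) expands to $\sum_{z \in \{x,y,u,v\}} \theta(z)\, z + K$, where on the left $\theta(z)$ is exactly the composition $\Lbranch(z,(4.j))$ of $\alpha_i$'s and $\beta_i$'s, and on the right it is $\Rbranch(z,(4.j))$. The constant parts collapse to $f_1(c_2,c_2)$ on the left and $f_2(c_1,c_1)$ on the right. Both sides then agree under the branch hypothesis \eqref{lin4} and the hypothesis $f_1(c_2,c_2) = f_2(c_1,c_1)$.

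For the necessity direction, I would first view (4.j) as an instance of the generalized equation (GEq) in the form of Theorem \ref{ABH}, relabeling $x,y,u,v$ when needed (a harmless operation, since object variables are universally quantified). Theorem \ref{ABH} then produces an Abelian group $(B;+)$, unique up to isomorphism, isotopic to each of the six operations $A_1,\dots,A_6$ in the generalized form, and presents each $A_k$ as $A_k(x,y) = \mu_k x + \nu_k y$ for bijections $\mu_k, \nu_k$ of $B$. The equation (4.j) forces three of the $A_k$ to coincide with $f_1$ and the remaining three with $f_2$; each coincidence $A_i = A_j$ yields an equality $\mu_i x + \nu_i y = \mu_j x + \nu_j y$ on $B$, and setting one argument to $0$ in such equalities reshapes them into relations of the form $\lambda(x+y) = \lambda_1 x + \lambda_2 y$ that meet the hypothesis of Lemma \ref{hol1}.

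Lemma \ref{hol1} then declares the relevant $\mu_k, \nu_k$ to be holomorphisms of $(B;+)$, and Lemma \ref{hol2} decomposes every such holomorphism as an automorphism plus an additive constant. Absorbing the resulting constants into a single $c_i$ for each operation produces the required linear form $f_i(x,y) = \alpha_i x + c_i + \beta_i y$ with $\alpha_i, \beta_i$ automorphisms of $(B;+)$. Substituting this form back into (4.j) and matching the coefficient of each object variable on the two sides yields the branch equalities $\Lbranch(z,(4.j)) = \Rbranch(z,(4.j))$, while matching the free terms produces $f_1(c_2,c_2) = f_2(c_1,c_1)$.

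The principal obstacle will be arranging the argument so that it applies uniformly across all sixteen equations: each variable arrangement in (4.j) produces its own identifications among the $A_k$ and its own system of holomorphism relations, and one must check in each case that Lemmas \ref{hol1} and \ref{hol2} do upgrade the bijections to automorphisms. The $\Lbranch$/$\Rbranch$ formalism is precisely what lets the coefficient-matching step be stated once and for all, independently of which of the sixteen (4.j) one is working with, so the case analysis collapses into a single statement about branches.
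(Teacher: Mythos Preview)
Your overall plan matches the paper's: obtain principal isotopy of $f_1,f_2$ to an Abelian group, upgrade the isotopy bijections to holomorphisms via Lemma~\ref{hol1} and then to automorphisms plus constants via Lemma~\ref{hol2}, and finally read off the $\Lbranch/\Rbranch$ conditions and the constant condition by substituting back. Two steps in your execution of the necessity direction, however, do not work as stated.

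First, Theorem~\ref{ABH} does \emph{not} present each $A_k$ as $A_k(x,y)=\mu_k x+\nu_k y$: only the outer operations $A_1,A_4$ get that shape, while the inner ones carry an extra bijection, e.g.\ $A_2(x,y)=\delta^{-1}(\alpha x+\beta y)$. So a coincidence $A_i=A_j$ is not $\mu_i x+\nu_i y=\mu_j x+\nu_j y$, and even if it were, setting one argument to $0$ would only give $\mu_i = T\mu_j$ for a translation $T$, never the Lemma~\ref{hol1} form. What does work is to exploit that in (4.j) each inner operation equals the \emph{outer} operation on the other side: e.g.\ $A_2=A_4$ reads $\delta^{-1}(\alpha x+\beta y)=\psi x+\epsilon y$, which after the substitution $z=\alpha x$, $w=\beta y$ is precisely Lemma~\ref{hol1} for $\delta^{-1}$. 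Second, relabelling object variables alone does not put every (4.j) into the Acz\'el--Belousov--Hossz\'u pattern; for the arrangements whose right--hand partition is $\{x,v\},\{y,u\}$ one must also allow some $A_k$ to be the \emph{dual} of $f_1$ or $f_2$, so your identification ``three $A_k$ equal $f_1$, three equal $f_2$'' holds only modulo dualization. With these two fixes your argument goes through. The paper sidesteps both issues by citing \cite{krapezZivkovic} directly for $f_i(x,y)=\lambda_i x+\rho_i y$ and then deriving $\lambda_1\in Hol(B;+)$ from the equation itself: setting $x_3=x_4=0$ and using that $x_1,x_2$ land in different inner terms on the right (which is exactly what the non--Belousov hypothesis guarantees) yields $\lambda_1(\lambda_2 x_1+\rho_2 x_2)+d=\sigma x_1+\tau x_2$, hence Lemma~\ref{hol1} applies.
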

\begin{proof}
(1) 
To show that the pair $(f_1,f_2)$ of operations is a solution of (4.j), just replace $f_i(x,y)$ in (4.j) using (\ref{sol4}) and all conditions (\ref{lin4}).

(2) 
An equation (4.j) is an instance of the appropriate generalized equation (GEq) with the Krsti\'c graph $K_{3,3}$\/.
Therefore, all operations of (GEq) are isotopic to an Abelian group $+$ and the main operations $f_1,f_2$ can be chosen to be principally isotopic to it (see \cite{krapezZivkovic}):
$${f_i(x,y) = {\gl}_i x + {\gr}_i y \quad (i = 1,2).}$$
Replace this in (Eq) to get:
\eql{cons}{{\gl}_1 f_2(x_1,x_2) + {\gr}_1 f_2(x_3,x_4) = {\gl}_2 f_1(x_5,x_6) + {\gr}_2 f_1(x_7,x_8).}
Since variables $x_1,x_2$ are separated on the right hand side of the equation (\ref{cons}), replacing $x_3$ and $x_4$ by $0$\/, we get:
$${\gl}_1 ({\gl}_2 x_1 + {\gr}_2 x_2) + d = \gs x_1 + \gt x_2$$
for $d = {\gr}_1 ({\gl}_2 0 + {\gr}_2 0)$ and appropriate $\gs,\gt$ depending on n. Therefore:
$${\gl}_1 (z + w) = {\gs}{\gl}_2^{-1} z + T{\gt}{\gr}_2^{-1} w$$
(where $T x = x - d$) and ${\gl}_1 \in Hol(B;+)$\/.

Analogously we get ${\gr}_1, {\gl}_2, {\gr}_2 \in Hol(B;+)$\/.

Using Lemma \ref{hol2} we easily get (\ref{sol4}) for $i = 1,2$ where ${\ga}_i, {\gb}_i$ are automorphisms of $(B;+)$\/.

Replace $f_1$ and $f_2$ in (4.j):
$${\ga}_1({\ga}_2 x_1 + c_2 + {\gb}_2 x_2) + c_1 + {\gb}_1({\ga}_2 x_3 + c_2 + {\gb}_2 x_4) =$$
$$= {\ga}_2({\ga}_1 x_5 + c_1 + {\gb}_1 x_6) + c_2 + {\gb}_2({\ga}_1 x_7 + c_1 + {\gb}_1 x_8).$$
Replacing $x_1 = x_2 = x_3 = x_4 = 0$\/, we get:
$${\ga}_1 c_2 + c_1 + {\gb}_1 c_2 = {\ga}_2 c_1 + c_2 + {\gb}_2 c_1$$
i.e. $f_1(c_2,c_2) = f_2(c_1,c_1)$\/.

For $x_2 = x_3 = x_4 = 0$\/, we get:
$$\Lbranch(x_1,(4.\rm{j})) = {\ga}_1{\ga}_2 x_1 = \gg \gd x_1 = \Rbranch(x_1,(4.\rm{j}))$$
for some $\gg, \gd \in \{{\ga}_1, {\gb}_1, {\ga}_2, {\gb}_2 \}$ depending on j.

Analogously:
$$\Lbranch(x_i,(4.\rm{j})) = \Rbranch(x_i,(4.\rm{j}))$$
for $i = 2, 3, 4$\/.

The uniqueness of the group $(B;+)$ follows from the Albert
Theorem (see \cite{7}): If  two groups are isotopic, then
they are isomorphic.
\end{proof}
%
\section{Equations with Krsti\'c graph $P_{3}$}

There exist $32$ parastrophically uncancellable non-gemini and non-balanced quadratic functional equations with four object variables and two operations:
\begin{align}
\label{3-1}f_1(f_2(x,y),f_2(x,u))=f_2(f_1(y,v),f_1(u,v))\\
\label{3-2}f_1(f_2(x,y),f_2(x,u))=f_2(f_1(y,v),f_1(v,u))\\
\label{3-3}f_1(f_2(x,y),f_2(x,u))=f_2(f_1(u,v),f_1(y,v))\\
\label{3-4}f_1(f_2(x,y),f_2(x,u))=f_2(f_1(u,v),f_1(v,y))\\
\label{3-5}f_1(f_2(x,y),f_2(x,u))=f_2(f_1(v,y),f_1(u,v))\\
\label{3-6}f_1(f_2(x,y),f_2(x,u))=f_2(f_1(v,y),f_1(v,u))\\
\label{3-7}f_1(f_2(x,y),f_2(x,u))=f_2(f_1(v,u),f_1(y,v))\\
\label{3-8}f_1(f_2(x,y),f_2(x,u))=f_2(f_1(v,u),f_1(v,y))\\
\label{3-9}f_1(f_2(x,y),f_2(y,u))=f_2(f_1(x,v),f_1(u,v))\\
\label{3-10}f_1(f_2(x,y),f_2(y,u))=f_2(f_1(x,v),f_1(v,u))\\
\label{3-11}f_1(f_2(x,y),f_2(y,u))=f_2(f_1(u,v),f_1(x,v))\\
\label{3-12}f_1(f_2(x,y),f_2(y,u))=f_2(f_1(u,v),f_1(v,x))\\
\label{3-13}f_1(f_2(x,y),f_2(y,u))=f_2(f_1(v,x),f_1(u,v))\\
\label{3-14}f_1(f_2(x,y),f_2(y,u))=f_2(f_1(v,x),f_1(v,u))\\
\label{3-15}f_1(f_2(x,y),f_2(y,u))=f_2(f_1(v,u),f_1(x,v))\\
\label{3-16}f_1(f_2(x,y),f_2(y,u))=f_2(f_1(v,u),f_1(v,x))\\
\label{3-17}f_1(f_2(x,y),f_2(u,x))=f_2(f_1(y,v),f_1(u,v))\\
\label{3-18}f_1(f_2(x,y),f_2(u,x))=f_2(f_1(y,v),f_1(v,u)) \\
\label{3-19}f_1(f_2(x,y),f_2(u,x))=f_2(f_1(u,v),f_1(y,v))\\
\label{3-20}f_1(f_2(x,y),f_2(u,x))=f_2(f_1(u,v),f_1(v,y))\\
\label{3-21}f_1(f_2(x,y),f_2(u,x))=f_2(f_1(v,y),f_1(u,v))\\
\label{3-22}f_1(f_2(x,y),f_2(u,x))=f_2(f_1(v,y),f_1(v,u))\\
\label{3-23}f_1(f_2(x,y),f_2(u,x))=f_2(f_1(v,u),f_1(y,v))\\
\label{3-24}f_1(f_2(x,y),f_2(u,x))=f_2(f_1(v,u),f_1(v,y))\\
\label{3-25}f_1(f_2(x,y),f_2(u,y))=f_2(f_1(x,v),f_1(u,v))\\
\label{3-26}f_1(f_2(x,y),f_2(u,y))=f_2(f_1(x,v),f_1(v,u))\\
\label{3-27}f_1(f_2(x,y),f_2(u,y))=f_2(f_1(u,v),f_1(x,v))\\
\label{3-28}f_1(f_2(x,y),f_2(u,y))=f_2(f_1(u,v),f_1(v,x))\\
\label{3-29}f_1(f_2(x,y),f_2(u,y))=f_2(f_1(v,x),f_1(u,v))\\
\label{3-30}f_1(f_2(x,y),f_2(u,y))=f_2(f_1(v,x),f_1(v,u))\\
\label{3-31}f_1(f_2(x,y),f_2(u,y))=f_2(f_1(v,u),f_1(x,v))\\
\label{3-32}f_1(f_2(x,y),f_2(u,y))=f_2(f_1(v,u),f_1(v,x))
\end{align}

The next lemma gives a general solution of the equation (\ref{3-10}) which generalizes the 
\emph{intermedial equation}
(see equation (4.36) and Theorem 8.4 of \cite{Krapez2007} for the original definition of intermedial equation).
\begin{lem}
\label{intermedial}
A general solution of the equation \emph{(\ref{3-10})} is given by:
\eql{sol51}{f_i(x,y) = {\ga}_i x + c_i + {\gb}_i y \quad (i = 1,2)}
where:
\begin{itemize}
\item $(B;+)$ is an arbitrary group,
\item $c_1, c_2$ are arbitrary elements of $B$ such that $f_1(c_2,c_2) = f_2(c_1,c_1)$\/,
\item ${\ga}_i, {\gb}_i \; (i = 1,2)$ are arbitrary automorphisms of + such that:
        \eql{lin51}{\Lbranch(z, (\ref{3-10})) = \Rbranch(z, (\ref{3-10}))}
         for $z \in \{ x, u \}$ and
         \eql{quadr51}{\Lbranch(w_i, (\ref{3-10})) w_i + c_i + \Rbranch(w_i, (\ref{3-10})) w_i = c_i}
         for $i \in \{ 1,2 \}, \/\/ w_1 = y$ and $w_2 = v$\/.
\end{itemize}
The group $(B;+)$ is unique up to isomorphism.
\end{lem}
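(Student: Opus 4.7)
Substitute $f_i(x,y) = \alpha_i x + c_i + \beta_i y$ into (\ref{3-10}) and expand both sides in the group $(B;+)$. The structural content of the $\Lbranch/\Rbranch$ calculus is that the coefficient of each variable in the expansion of a term is exactly its corresponding branch value; hence (\ref{lin51}) matches the linear variables $x$ and $u$ between the two sides, (\ref{quadr51}) forces the pair of occurrences of each quadratic variable ($y$ on the left, $v$ on the right) to collapse into the ambient constant $c_i$, and the residual constants match thanks to $f_1(c_2,c_2) = f_2(c_1,c_1)$.

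\textbf{Necessity.} Assume $(f_1,f_2)$ satisfies (\ref{3-10}). Since $v$ is absent from the left side and $y$ from the right, both sides coincide with a common binary operation $\Theta(x,u)$, producing the two simultaneous generalized-transitivity equations
\begin{gather*}
f_1(f_2(x,y),f_2(y,u)) = \Theta(x,u), \\
f_2(f_1(x,v),f_1(v,u)) = \Theta(x,u).
\end{gather*}
Applying Theorem \ref{krapez theorem} to the first yields a group $(B;+)$ together with permutations $\alpha,\beta,\gamma,\delta,\epsilon,\psi$ of $B$ for which $f_1(x,y) = \alpha x + \beta y$, while $f_2$ inherits two representations, one from the $A_2$-slot and one from the $A_3$-slot. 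Equating these two descriptions of the single operation $f_2$ produces a functional identity of the type covered by Lemma \ref{hol1}, and Lemma \ref{hol2} then lets me exhibit the relevant bijections as an automorphism composed with a translation, so that $f_2(x,y) = \alpha_2 x + c_2 + \beta_2 y$ with $\alpha_2, \beta_2 \in Aut(B;+)$. The symmetric argument applied to the second transitivity equation yields the analogous linear form for $f_1$, and both representations live over the same group up to isomorphism by Albert's theorem, exactly as in Theorem \ref{main1}. Finally, substituting these linear forms back into (\ref{3-10}) and matching, respectively, the coefficients of the linear variables, the cancellation patterns of the quadratic variables, and the residual constant terms reads off the three stated conditions (\ref{lin51}), (\ref{quadr51}), and $f_1(c_2,c_2) = f_2(c_1,c_1)$.

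\textbf{Main obstacle.} The essential difficulty, absent from Theorem \ref{main1}, is that $(B;+)$ need not be Abelian: the two occurrences of each quadratic variable contribute terms of the form $\alpha_1\beta_2\,y$ and $\beta_1\alpha_2\,y$ lying on opposite sides of the constant $c_1$ (and symmetrically for $v$ around $c_2$), which cannot be combined by commutativity. This is precisely why the quadratic condition takes the two-sided cancellation form (\ref{quadr51}) rather than a clean commutation relation among the $\alpha$'s and $\beta$'s. A secondary subtlety is the reconciliation of the two groups extracted from the two transitivity equations, which one handles via Albert's theorem, together with the bookkeeping needed to pass from the two isotopic descriptions of $f_2$ (and of $f_1$) to the automorphism-plus-translation form through Lemmas \ref{hol1} and \ref{hol2}.
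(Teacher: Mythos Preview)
Your overall strategy---reduce to generalized transitivity via Theorem~\ref{krapez theorem} and then upgrade permutations to automorphisms through Lemmas~\ref{hol1} and~\ref{hol2}---is the paper's strategy. But the step in which you extract the automorphism form of $f_2$ has a gap. Equating the $A_2$- and $A_3$-representations of $f_2$ coming out of Theorem~\ref{krapez theorem}, namely $\alpha^{-1}(\alpha\gamma x + \alpha\delta y) = \beta^{-1}(\beta\epsilon x + \beta\psi y)$, and applying Lemma~\ref{hol1} shows only that the compositions $\beta\alpha^{-1}$, $(\beta\epsilon)(\alpha\gamma)^{-1}$ and $(\beta\psi)(\alpha\delta)^{-1}$ lie in $Hol(B;+)$; this tells you nothing about $\alpha$, $\alpha\gamma$ or $\alpha\delta$ individually, so you cannot conclude that $f_2(x,y) = \alpha_2 x + c_2 + \beta_2 y$ with $\alpha_2,\beta_2$ automorphisms. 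The same defect recurs in your symmetric treatment of $f_1$ via the second transitivity equation, and the detour through a second group reconciled by Albert's theorem is an unnecessary complication that in any case does not deliver principal isotopy of $f_2$ to the original $+$.

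The paper closes the gap differently. Fixing $v=a$ and setting $\gamma x = f_1(x,a)$, $\delta u = f_1(a,u)$, $g(x,u) = f_2(\gamma x,\delta u)$, the single transitivity equation $f_1(f_2(x,y),f_2(y,u)) = g(x,u)$ already makes $f_1(x,y) = \lambda_1 x + \rho_1 y$ and $g(x,y) = \lambda_3 x + \rho_3 y$ principal isotopes of one group $+$; inverting the definition of $g$ then gives $f_2(x,y) = \lambda_3\gamma^{-1} x + \rho_3\delta^{-1} y = \lambda_2 x + \rho_2 y$, so both $f_1$ and $f_2$ are principal isotopes of the \emph{same} $+$ with no second reduction needed. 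Only now does the paper substitute both forms back into (\ref{3-10}) and, by specialising variables (for instance $\rho_2 u = \rho_1 v = 0$), isolate each of $\lambda_1,\rho_1,\lambda_2,\rho_2$ in an identity of the shape treated by Lemma~\ref{hol1}, concluding that each lies in $Hol(B;+)$. Your final paragraph on the non-Abelian obstruction and the resulting two-sided form of condition~(\ref{quadr51}) is correct.
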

\begin{proof}
(1)
To show that the pair $(f_1,f_2)$ of operations is a solution of (\ref{3-10}), just replace $f_i(x,y)$ in (\ref{3-10}) using (\ref{sol51}) and all conditions (\ref{lin51}), (\ref{quadr51}).

(2)
The equation (\ref{3-10}) is an instance of the generalized intermedial equation: 
\eqt{GI}{f_1(h_1(x,y),h_2(y,u))=f_2(h_3(x,v),h_4(v,u)).}
 
Choose $v = a$ for some $a \in B$ and define $\gg x = h_1(x,a), \gd u = h_2(a,u)$ and $g(x,u) = f_2(\gg x, \gd u)$\/. We get:
\eqt{GT}{f_1(h_1(x,y),h_2(y,u))=g(x,u)}
which is the generalized transitivity equation. By the Theorem \ref{krapez theorem}
all operations of this equation are isotopic to a group $+$ and the main operations $f_1, g$ can be chosen to be principally isotopic to it:
$$f_1(x,y) = {\gl}_1 x + {\gr}_1 y \qquad
g(x,y) = {\gl}_3 x + {\gr}_3 y.$$
It follows that $f_2(x,y) = {\gl}_3 {\gg}^{-1} x + {\gr}_3 {\gd}^{-1} y
= {\gl}_2 x + {\gr}_2 y$ for appropriate ${\gl}_2, {\gr}_2$\/.
Replacing this in (\ref{3-10}) we get:
\eql{im2}{{\gl}_1 ({\gl}_2 x + {\gr}_2 y) + {\gr}_1 ({\gl}_2 y + {\gr}_2 u) = {\gl}_2 ({\gl}_1 x + {\gr}_1 v) + {\gr}_2 ({\gl}_1 v + {\gr}_1 u).}

If we choose ${\gr}_2 u = {\gr}_1 v = 0$ and define 
$d = {\gr}_2 ({\gl}_1 {\gr}_1^{-1} 0 + {\gr}_1 {\gr}_2^{-1} 0)$ we get:
$${\gl}_1 ({\gl}_2 x + {\gr}_2 y) + {\gr}_1 {\gl}_2 y = {\gl}_2 {\gl}_1 x + d$$
which implies that ${\gl}_1 \in Hol(B;+)$\/. 

Analogously we get ${\gr}_1, {\gl}_2, {\gr}_2 \in Hol(B;+)$\/.

Using Lemma \ref{hol2} we easily get (\ref{sol51}) for $i = 1,2$ where ${\ga}_i, {\gb}_i$ are automorphisms of $(B;+)$\/.

Replace $f_1$ and $f_2$ in (\ref{3-10}):
$${\ga}_1({\ga}_2 x + c_2 + {\gb}_2 y) + c_1 + {\gb}_1({\ga}_2 y + c_2 + {\gb}_2 u) =$$
$$= {\ga}_2({\ga}_1 x + c_1 + {\gb}_1 v) + c_2 + {\gb}_2({\ga}_1 v + c_1 + {\gb}_1 u).$$
Putting $x = y = u = v = 0$\/, we get:
$${\ga}_1 c_2 + c_1 + {\gb}_1 c_2 = {\ga}_2 c_1 + c_2 + {\gb}_2 c_1$$
i.e. $f_1(c_2,c_2) = f_2(c_1,c_1)$\/.

For $y = u = v = 0$ we get:
$$\Lbranch(x,(\ref{3-10})) = {\ga}_1{\ga}_2 = {\ga}_2 {\ga}_1 = \Rbranch(x,(\ref{3-10})).$$

Analogously:
$$\Lbranch(u,(\ref{3-10})) = \Rbranch(u,(\ref{3-10}))\/,$$ 
$$\Lbranch(y,(\ref{3-10})) y + c_1 + \Rbranch(y,(\ref{3-10})) y =
{\ga}_1{\gb}_2 y + c_1 + {\gb}_1 {\ga}_2 y = c_1,$$
$$\Lbranch(v,(\ref{3-10})) v + c_2 + \Rbranch(v,(\ref{3-10})) v =
{\ga}_2{\gb}_1 v + c_2 + {\gb}_2 {\ga}_1 v = c_2.$$

The uniqueness of the group $(B;+)$ follows from the Albert
Theorem.
\end{proof}

\begin{lem}
\label{15a}
A general solution of the equation \emph{(5.j) \/ (j = 1,2,5,6,9,13,14,17,18, \\ 21,22,25,26,29,30)} is given by:
\eql{sol52}{f_i(x,y) = {\ga}_i x + c_i + {\gb}_i y \quad (i = 1,2)}
where:
\begin{itemize}
\item $(B;+)$ is an arbitrary Abelian group,
\item $c_1, c_2$ are arbitrary elements of $B$ such that $f_1(c_2,c_2) = f_2(c_1,c_1)$\/,
\item ${\ga}_i, {\gb}_i \; (i = 1,2)$ are arbitrary automorphisms of + such that:
        \eql{lin52}{\Lbranch(z, (5.\emph{j})) = \Rbranch(z, (5,\emph{j}))}
         for all linear variables $z$ of \emph{(5.j)} and
         \eql{quadr52}{\Lbranch(w, (\emph{5.j})) w + \Rbranch(w, (\emph{5.j})) w = 0} 
         for all quadratic variables $w$ from the equation.
\end{itemize}
The group $(B;+)$ is unique up to isomorphism.
\end{lem}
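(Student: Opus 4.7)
The plan is to follow the template of Lemma \ref{intermedial}, strengthening the conclusion at two points: the underlying group must now be Abelian, and for each quadratic variable the branch identity takes the simplified form \eqref{quadr52} (with no $c_i$ appearing on either side). The sufficiency direction is direct substitution: insert the linear form \eqref{sol52} into (5.j), use that the $\alpha_i$ and $\beta_i$ are automorphisms of an Abelian group to distribute freely, and observe that the coefficients of each linear variable cancel by \eqref{lin52}, the contributions of each quadratic variable cancel by \eqref{quadr52}, and the constant terms on both sides match by $f_1(c_2,c_2) = f_2(c_1,c_1)$.

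For the converse, I first view each (5.j) as an instance of a generalized equation $f_1(h_1,h_2) = f_2(h_3,h_4)$ whose Krsti\'c graph is $P_3$. As in the intermedial case, specializing one of the two quadratic variables to a fixed element of $B$ collapses one side of (5.j) into a single new binary operation; up to a parastrophic rearrangement of arguments, this reduces the equation to the generalized transitivity shape of Theorem \ref{krapez theorem}, which principally isotopes $f_1$ and $f_2$ to a common group $(B;+)$ and gives $f_i(x,y) = \lambda_i x + \rho_i y$. Substituting this back into (5.j) and in turn zeroing out three of the four variables produces identities of the form $\lambda_k(u+w) = \mu u + \nu w$ and analogues for $\rho_k$; Lemma \ref{hol1} then places each of $\lambda_1, \rho_1, \lambda_2, \rho_2$ in $Hol(B;+)$, and Lemma \ref{hol2} yields the linear form $f_i(x,y) = \alpha_i x + c_i + \beta_i y$ with $\alpha_i, \beta_i \in Aut(B;+)$.

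The main obstacle, and the only genuinely new ingredient beyond the argument of Lemma \ref{intermedial}, is upgrading ``group'' to ``Abelian group''. For each of the fifteen listed equations the distinguishing structural feature --- which fails for intermediality --- is that the two quadratic variables occur in positions such that, after the linear form is substituted and the linear variables are suitably specialized, the equation collapses to an identity of the form $a + b = b + a$ for $a,b$ ranging over the images of two surjective automorphisms, forcing $(B;+)$ to be commutative. Once Abelianness is secured, expanding (5.j) and equating the now unambiguous coefficients of each variable gives \eqref{lin52} for the linear variables and \eqref{quadr52} for the quadratic variables, while setting all four variables to zero recovers $f_1(c_2,c_2) = f_2(c_1,c_1)$. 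Uniqueness of $(B;+)$ up to isomorphism is the Albert theorem, exactly as in Lemma \ref{intermedial}.
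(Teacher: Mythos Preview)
Your outline tracks the paper's proof almost step for step: reduce to a group isotopy via the generalized--transitivity structure, pass to the linear form $f_i(x,y)=\alpha_i x + c_i + \beta_i y$ through $Hol(B;+)$, then establish commutativity of $+$, and finally read off the branch conditions and invoke Albert. The paper reaches the linear form by a slightly different path --- it observes that each of the fifteen (5.j) becomes the generalized intermedial equation (GI) after dualizing some of the non--main operations in its generalized version, and then quotes the proof of Lemma~\ref{intermedial} wholesale --- but your direct reduction to generalized transitivity by freezing one quadratic variable is an equivalent manoeuvre.

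The one place your sketch is too thin, and in fact not quite accurate, is the commutativity step. You say that specializing the \emph{linear} variables collapses (5.j) to an identity $a+b=b+a$. The paper's proof shows that no single uniform specialization does the job: it performs a case split on the variable pattern (whether $x$ or $y$ is the left--quadratic variable; whether the repeated left variable sits in slot $x_3$ or $x_4$; and so on), and in each branch it sets a \emph{mixture} of one linear and one quadratic variable to $0$, extracts two auxiliary identities by further specializing, and then feeds those back into the two--variable equation to force commutativity. For instance, for the pattern $xyyu = xvuv$ (equation (5.9)) one first puts $x=y=0$, then separately $v=0$ and $u=0$, and combines the three resulting relations. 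Your intuition that the positioning of the quadratic variables is what distinguishes these fifteen equations from (5.10) is exactly right, but converting it into a proof requires this case analysis, which is in fact the bulk of the paper's argument for this lemma.
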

\begin{proof}
(1)
To show that the pair $(f_1,f_2)$ of operations is a solution of (5.j), just replace $f_i(x,y)$ in (5.j) using (\ref{sol52}) and all conditions (\ref{lin52}), (\ref{quadr52}).

(2) The crucial property of all 15 equations (5.j) is that, by applying duality to some of non--main operations of the generalized version of (5.j),
they may be transformed into equation (GI):
$$f_1(h_1(x,y),h_2(y,u))=f_2(h_3(x,v),h_4(v,u))$$
which, by the proof of the Lemma \ref{intermedial}, has a solution:
$$f_i(x,y) = {\ga}_i x + c_i + {\gb}_i y \quad (i = 1,2)$$
where $(B;+)$ is a group and ${\ga}_i, {\gb}_i$ are automorphisms of $+$\/.

Replacing $f_1, f_2$ in (5.j), we get:
\begin{equation}
\label{eq+}
\begin{aligned}
&&{\ga}_1({\ga}_2 x_1 + c_2 + {\gb}_2 x_2) + c_1 + {\gb}_1({\ga}_2 x_3 + c_2 + {\gb}_2 x_4) = \\
&&= \,{\ga}_2({\ga}_1 x_5 + c_1 + {\gb}_1 x_6) + c_2 + {\gb}_2({\ga}_1 x_7 + c_1 + {\gb}_1 x_8).
\end{aligned}
\end{equation}

Just as in the proof of Lemma \ref{intermedial}, we conclude that $f_1(c_2,c_2) = f_2(c_1,c_1)$\/. Let us define $c = f_1(c_2,c_2)$\/. 

To prove the properties from the statement of the Lemma, we need to discuss the arrangement 
$x_1 \dots x_4 = x_5 \dots x_8$ of variables in the equation (\ref{eq+}). It is easy to see:
\begin{itemize}
\item The order of first (i.e. left) appearances of variables is always $xyuv$\/.
\item $x_1 = x$\/.
\item Since $P_3$ has no nodes with loops, $x_2 = y$\/.
\item Either $x$ or $y$ is quadratic, but not both.
\item Variable $u$ is always linear.
\item Variable $v$ is always quadratic.
\item Arrangement $xyyu = xvvu$ is not allowed.
\end{itemize}

There are two possibilities: $x$ is either linear or quadratic.
\begin{itemize}
\item[a)] Variable $x$ is linear (and $y$ is quadratic). \\
Again, there are two possibilities: Either $x_3 = y$ or $x_3 = u$\/.
	\begin{itemize}
	\item[a1)] $x_3 = y$ (and $x_4 = u)$\/. \\
	Yet again, there are two possibilities: Either $x_5 = x$ or $x_5 = v$\/.
		\begin{itemize}
		\item[a11)] The arrangement of variables is $xyyu = xvuv$\/. \\
		We have equation (\ref{3-9}). Replacing $x = y = 0$ in (\ref{eq+}), we get:
		\eql{just2}{c + {\gb}_1 {\gb}_2 u = {\ga}_2 c_1 + {\ga}_2 {\gb}_1 v + c_2 + {\gb}_2 {\ga}_1 u + {\gb}_2 c_1 + {\gb}_2{\gb}_1v.}
		 For $v = 0$\, we get: 
		\eql{just u}{{\gb}_2 c_1 + {\gb}_1 {\gb}_2 u = {\gb}_2 {\ga}_1 u + {\gb}_2 c_1}
		and for $u = 0$\/: 
		\eql{just v}{c - {\gb}_2 {\gb}_1 v = {\ga}_2 c_1 + {\ga}_2 {\gb}_1 v + c_2 + {\gb}_2 c_1.}	
		Applying (\ref{just u}) and (\ref{just v}) to (\ref{just2}), we conclude:
		$$c + {\gb}_1 {\gb}_2 u - {\gb}_2 {\gb}_1 v = c - {\gb}_2 {\gb}_1 v + {\gb}_1 {\gb}_2 u$$
		which is, after cancellation from the left, equivalent to commutativity of $+$\/. Therefore $(B;+)$ is an Abelian 			group.
		\item[a12)] The arrangement of variables is $xyyu = vx(uv \text{ or } vu)$\/. \\
		Replacement $y = u = 0$ leads to:
		\eql{xv}{{\ga}_1 {\ga}_2 x + c = {\ga}_2 {\ga}_1 v + {\ga}_2 c_1 + {\ga}_2{\gb}_1 x + c_2 + t(v)}
		where  
		\begin{equation*}
		t(v) = \begin{cases}
			    {\gb}_2{\ga}_1 v + {\gb}_2 c_1, &\text{ if } x_7 = v \\
			    {\gb}_2 c_1 + {\gb}_2{\gb}_1 v, &\text{ if } x_7 = u
			    \end{cases}
		\end{equation*}
		Note that in both cases $t(0) = {\gb}_2 c_1$\/.
		Putting $x = 0$\/, we get:
		\eql{v}{t(v) = - c_2 - {\ga}_2 c_1 - {\ga}_2{\ga}_1 v + c}
		while replacement $v = 0$ leads to:
		\eql{x2}{{\ga}_1 {\ga}_2 x + {\ga}_2 c_1 = {\ga}_2 c_1 + {\ga}_2 {\gb}_1 x.}  
		Using (\ref{v}) and (\ref{x2}) in (\ref{xv}), we conclude:
		$${\ga}_1 {\ga}_2 x + c = {\ga}_2{\ga}_1 v + {\ga}_1 {\ga}_2 x - {\ga}_2{\ga}_1 v + c$$
		which implies that the group $(B;+)$ is Abelian.
		\end{itemize}
	\item[a2)] $x_3 = u$ (and $x_4 = y)$\/. \\
	The arrangement of variables is $xyuy = (xv \text{ or }vx)(uv \text{ or } vu)$\/.
	Replacement $x = v = 0$ in (5.j) yields:
	\eql{yu}{{\ga}_1 c_2 + {\ga}_1{\gb}_2 y + c_1 + {\gb}_1{\ga}_2 u + {\gb}_1 c_2 + {\gb}_1{\gb}_2 y = t(u)}
	where 
	\begin{equation*}
		t(u) = \begin{cases}
			    {\ga}_2 c_1 + c_2 + {\gb}_2{\ga}_1 u + {\gb}_2 c_1, &\text{ if } x_7 = u \\
			    c + {\gb}_2{\gb}_1 u, &\text{ if } x_7 = v
			    \end{cases}
	\end{equation*}
	Note that in both cases $t(0) = c$\/. Putting $y = 0$ in (\ref{yu}), we get:
	\eql{11}{{\ga}_1 c_2 + c_1 + {\gb}_1{\ga}_2 u + {\gb}_1 c_2 = t(u)}
	while replacement $u = 0$ yields:
	\eql{12}{{\ga}_1 c_2 + {\ga}_1{\gb}_2 y + c_1 = c - {\gb}_1{\gb}_2 y - {\gb}_1 c_2.}
	Feeding (\ref{11}) and (\ref{12}) in (\ref{yu}), we get:
	$$c - {\gb}_1{\gb}_2 y - {\gb}_1 c_2 + {\gb}_1{\ga}_2 u + {\gb}_1 c_2 = {\ga}_1 c_2 + c_1 + {\gb}_1{\ga}_2 u + {\gb}_1 c_2 - {\gb}_1{\gb}_2 y$$
	which implies commutativity of $+$\/.
\end{itemize}
\item[b)] Variable $x$ is quadratic (and $y$ is linear). \\
The arrangement of variables is $xy(xu \text{ or }ux) = (yv \text{ or }vy)(uv \text{ or } vu)$\/.
Let $u = v = 0$\/. We have:
\eql{xy}{{\ga}_1{\ga}_2 x + {\ga}_1 c_2 + {\ga}_1{\gb}_2 y + c_1 + s(x) = t(y)}
where:
\begin{equation*}
		s(x) = \begin{cases}
			   {\gb}_1{\ga}_2 x + {\gb}_1 c_2, &\text{ if } x_3 = x \\
			    {\gb}_1 c_2 + {\gb}_1{\gb}_2 x, &\text{ if } x_3 = u
			    \end{cases}
\end{equation*}
\begin{equation*}
		t(y) = \begin{cases}
			    {\ga}_2 {\ga}_1 y + c, &\text{ if } x_5 = y \\
			    {\ga}_2 c_1 + {\ga}_2{\gb}_1 y + c_2 + {\gb}_2 c_1, &\text{ if } x_5 = v.
			    \end{cases}
	\end{equation*}
Note that $s(0) = {\gb}_1 c_2$ and $t(0) = c$\/. Specifying $x = 0$\/, we get:
\eql{xx}{{\ga}_1 c_2 + {\ga}_1{\gb}_2 y + c_1 + {\gb}_1 c_2 = t(y)}
while $y = 0$ yields:
\eql{yy}{c_1 + s(x) = - {\ga}_1 c_2 - {\ga}_1{\ga}_2 x + c.}
Feeding (\ref{xx}) and (\ref{yy}) into (\ref{xy}), we get:
$${\ga}_1{\ga}_2 x + {\ga}_1 c_2 + {\ga}_1{\gb}_2 y - {\ga}_1 c_2 - {\ga}_1{\ga}_2 x + {\ga}_1 c_2 = {\ga}_1 c_2 + {\ga}_1{\gb}_2 y$$
which implies that the group $(B;+)$ is Abelian.
\end{itemize}

Because of commutativity of $+$ and the condition for $c$\/, the equation (5.j) reduces to:
\begin{equation*}
\begin{aligned}
&&{\ga}_1{\ga}_2 x_1 + {\ga}_1{\gb}_2 x_2 + {\gb}_1{\ga}_2 x_3 + {\gb}_1{\gb}_2 x_4 = \\
&&= \,{\ga}_2{\ga}_1 x_5 + {\ga}_2{\gb}_1 x_6 + {\gb}_2{\ga}_1 x_7 + {\gb}_2{\gb}_1 x_8).
\end{aligned}
\end{equation*}
which is equivalent to the system:
\begin{equation*}
\begin{cases}
\enspace \Lbranch(z, (5.\rm{j})) = \Rbranch(z, (5,\text{j})) \\
\enspace \Lbranch(w, (\rm{5.j})) w + \Rbranch(w, (\rm{5.j})) w = 0 
\end{cases}
\end{equation*}
         for all linear variables $z$ and all quadratic variables $w$\/. 

The uniqueness of the group $(B;+)$ follows from the Albert
Theorem.
\end{proof}

\begin{lem}
\label{extramedial}
A general solution of the equation \emph{(\ref{3-23})} is given by:
\begin{equation*}
\begin{cases}
\begin{aligned}
f_1(x,y) &= &\!\!{\ga}_1 x + c_1 + {\gb}_1 y  \\
f_2(x,y) &= &\!\!{\gb}_2 y + c_2 + {\ga}_2 x
\end{aligned}
\end{cases}
\tag{23}
\end{equation*}
where:
\begin{itemize}
\item $(B;+)$ is an arbitrary group,
\item $c_1, c_2$ are arbitrary elements of $B$ such that $f_1(c_2,c_2) = f_2(c_1,c_1)$\/,
\item ${\ga}_i, {\gb}_i \; (i = 1,2)$ are arbitrary automorphisms of + such that:
        \eql{lin53}{\Lbranch(z, (\ref{3-23})) = \Rbranch(z, (\ref{3-23}))}
         for $z \in \{ y, u \}$\/,
	  \eql{q531}{\Lbranch(x, (\ref{3-23})) x + c_1 + \Rbranch(x, (\ref{3-23})) x = c_1}
	   \eql{q532}{\Rbranch(v, (\ref{3-23})) v + c_2 + \Lbranch(v, (\ref{3-23})) v = c_2.}
\end{itemize}
The group $(B;+)$ is unique up to isomorphism.
\end{lem}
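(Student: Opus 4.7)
The plan is to reduce (\ref{3-23}) to the intermedial-type situation already handled by Lemma~\ref{intermedial}, by a duality trick applied \emph{to $f_2$ itself}; this is precisely what will produce the unusual reversed shape $f_2(x,y) = {\gb}_2 y + c_2 + {\ga}_2 x$ in the conclusion. Afterwards, I will imitate the variable-specialization argument from the end of Lemma~\ref{intermedial} to read off the four branch conditions.

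First I introduce the dual $g_2(a,b) := f_2(b,a)$. Using $f_2(x,y) = g_2(y,x)$, $f_2(u,x) = g_2(x,u)$, and $f_2(f_1(v,u),f_1(y,v)) = g_2(f_1(y,v),f_1(v,u))$, equation~(\ref{3-23}) rewrites as
\[f_1(g_2(y,x),\,g_2(x,u)) \;=\; g_2(f_1(y,v),\,f_1(v,u)),\]
which (after renaming object variables $y,x,u,v \mapsto x,y,u,v$) is an instance of the generalized intermedial equation (GI). Hence the argument of Lemma~\ref{intermedial} applies to the pair $(f_1,g_2)$ and yields a group $(B;+)$, together with $f_1(x,y) = {\ga}_1 x + c_1 + {\gb}_1 y$ and $g_2(x,y) = {\ga}'x + c_2 + {\gb}'y$ for some automorphisms of~$+$. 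Setting ${\ga}_2 := {\gb}'$ and ${\gb}_2 := {\ga}'$ and undoing the duality via $f_2(x,y) = g_2(y,x)$ yields the claimed shape of~$f_2$.

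Next I substitute these two linear forms into (\ref{3-23}) and specialize the object variables. Putting $x=y=u=v=0$ produces $f_1(c_2,c_2) = f_2(c_1,c_1)$; then zeroing three of the four variables at a time and comparing the single surviving one on the two sides yields, one condition per variable, (\ref{lin53}) for the linear $y$ and $u$, (\ref{q531}) for the left-quadratic $x$, and (\ref{q532}) for the right-quadratic $v$. A direct computation from Definition~\ref{branches} confirms that the coefficient strings appearing in these identities are exactly $\Lbranch$ and $\Rbranch$ of the corresponding variables in (\ref{3-23}) (for instance $\Lbranch(x) = {\ga}_1{\ga}_2$, $\Rbranch(x) = {\gb}_1{\gb}_2$ and $\Lbranch(v) = {\ga}_2{\ga}_1$, $\Rbranch(v) = {\gb}_2{\gb}_1$). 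The converse direction is an immediate substitution, and uniqueness of $(B;+)$ is Albert's theorem.

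The main obstacle I anticipate is careful non-commutative bookkeeping. Unlike Lemma~\ref{15a}, where Abelianness of $+$ is forced and then freely exploited, here $(B;+)$ is genuinely non-Abelian in general, so summands must not be reordered. Moreover the reversed shape of $f_2$ places ${\ga}_2$ to the \emph{right} of $c_2$, so when matching $\Lbranch$/$\Rbranch$-products with coefficient strings in the expansion of~(\ref{3-23}), the positional convention of Definition~\ref{branches} (left argument $\leftrightarrow {\ga}_i$, right argument $\leftrightarrow {\gb}_i$) must be applied position-by-position rather than read off the linear form of $f_2$ additively. With this bookkeeping observed, all four conditions drop out cleanly from the four specializations.
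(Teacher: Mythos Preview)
Your proposal is correct and follows essentially the same route as the paper: define the dual $g_2(a,b)=f_2(b,a)$, observe that (\ref{3-23}) then becomes exactly equation (\ref{3-10}) for the pair $(f_1,g_2)$ (after the $x\leftrightarrow y$ relabelling), invoke Lemma~\ref{intermedial}, and then undo the duality by setting $\alpha_2:=\beta'$, $\beta_2:=\alpha'$, which produces the reversed form $f_2(x,y)=\beta_2 y+c_2+\alpha_2 x$. The only cosmetic difference is that the paper transports the four conditions directly from Lemma~\ref{intermedial} via the dictionary $\alpha_3\mapsto\beta_2$, $\beta_3\mapsto\alpha_2$, whereas you re-derive them by the $0$-specialization argument; both give the same identities, and your closing remark about reading $\Lbranch/\Rbranch$ positionally (so that the outer $f_2$ contributes $\alpha_2$ on its first argument even though $\alpha_2$ sits to the right of $c_2$ in the additive form) is exactly the bookkeeping point that makes the non-Abelian case go through.
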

\begin{proof}
(1)
To show that the pair $(f_1,f_2)$ of operations is a solution of (\ref{3-23}), just replace $f_i(x,y)$ in (\ref{3-23}) using $(23)$ and all conditions (\ref{lin53})--(\ref{q532}).

(2)
Define new quasigroup $f_3$ to be the dual quasigroup of $f_2$\/, i.e. $f_3(x,y) = f_2(y,x)$\/. The equation (\ref{3-23}) transforms into equation (\ref{3-10}) with a general solution given by the Theorem \ref{intermedial}:
\begin{equation*}
\begin{cases}
\begin{aligned}
f_1(x,y) &= &\!\!{\ga}_1 x + c_1 + {\gb}_1 y  \\
f_3(x,y) &= &\!\!{\ga}_3 x + c_3 + {\gb}_3 y
\end{aligned}
\end{cases}
\tag{23*}
\end{equation*}
where:
\begin{itemize}
\item $(B;+)$ is an arbitrary group,
\item $c_1, c_3$ are arbitrary elements of $B$ such that $f_1(c_3,c_3) = f_3(c_1,c_1)$\/,
\item ${\ga}_i, {\gb}_i \; (i = 1,3)$ are arbitrary automorphisms of + such that:
$${\ga}_1{\ga}_3 = {\ga}_3{\ga}_1$$
$${\gb}_1{\gb}_3 = {\gb}_3{\gb}_1$$
$${\ga}_1{\gb}_3 x + c_1 + {\gb}_1{\gb}_3 x = c_1$$
$${\ga}_3{\gb}_1 v + c_3 + {\gb}_3{\ga}_1 v = c_3.$$
\end{itemize}

Define: ${\ga}_2 = {\gb}_3, {\gb}_2 = {\ga}_3$ and $c_2 = c_3$ and replace in (23*) to get: \\
$f_2(x,y) = f_3(y,x) = {\ga}_3 y + c_2 + {\gb}_3 x = {\gb}_2 y + c_2 + {\ga}_2 x$\/, and
$${\ga}_1{\gb}_2 = {\gb}_2{\ga}_1$$
$${\gb}_1{\ga}_2 = {\ga}_2{\gb}_1$$
$${\ga}_1{\ga}_2 x + c_1 + {\gb}_1{\ga}_2 x = c_1$$
$${\gb}_2{\gb}_1 v + c_2 + {\ga}_2{\ga}_1 v = c_2.$$
which is:
        $$\Lbranch(z, (5.23)) = \Rbranch(z, (5.23))$$
         for $z \in \{ y, u \}$\/, and
        $$\Lbranch(x, (5.23)) x + c_1 + \Rbranch(x, (5.23)) x = c_1,$$
	  $$\Rbranch(v, (5.23)) v + c_2 + \Lbranch(v, (5.23)) v = c_2.$$

Trivially, $f_1(c_2,c_2) = f_2(c_1,c_1)$\/.

The uniqueness of the group $(B;+)$ follows from the Albert
Theorem.
\end{proof}

\begin{lem}
\label{15b}
A general solution of the equation \emph{(5.k) \/ (k = 3,4,7,8,11,12,15,16,19, \\ 20,24,27,28,31,32)} is given by:
\eql{sol30}{f_i(x,y) = {\ga}_i x + c_i + {\gb}_i y \quad (i = 1,2)}
where:
\begin{itemize}
\item $(B;+)$ is an arbitrary Abelian group,
\item $c_1, c_2$ are arbitrary elements of $B$ such that $f_1(c_2,c_2) = f_2(c_1,c_1)$\/,
\item ${\ga}_i, {\gb}_i \; (i = 1,2)$ are arbitrary automorphisms of + such that:
        \eql{lin30}{\Lbranch(z, (5.\emph{k})) = \Rbranch(z, (5,\emph{k}))}
         for all linear variables $z$ of \emph{(5.k)} and
         \eql{q30}{\Lbranch(w, (\emph{5.k})) w + \Rbranch(w, (\emph{5.k})) w = 0} 
         for all quadratic variables $w$ from the equation.
\end{itemize}
The group $(B;+)$ is unique up to isomorphism.
\end{lem}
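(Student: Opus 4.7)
The plan is to imitate the reduction used in Lemma~\ref{extramedial}. For each equation (5.$k$) with $k$ in the stated list, I look for a single duality, applied to one of the main operations $f_1$ or $f_2$, that rewrites (5.$k$) as either the intermedial equation (\ref{3-10}) or one of the $15$ equations already treated in Lemma~\ref{15a}. A case-by-case inspection of the variable arrangements shows that each equation in the present list differs from some target equation in the 15a-list (or from (\ref{3-10})) by exactly one argument-swap at an occurrence of $f_1$ or $f_2$; introducing $f_i^d(x,y) = f_i(y,x)$ and globally rewriting the equation in terms of $f_i^d$ then yields the desired target.

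Having fixed such a duality for a given (5.$k$), set $f_3(x,y) = f_i(y,x)$ for the relevant $i \in \{1,2\}$ and rewrite the equation in terms of $f_3$ and the unaltered $f_j$ with $j \neq i$. The result is an equation handled by either Lemma~\ref{15a} or Lemma~\ref{intermedial}, which supplies an Abelian group $(B;+)$ together with linear representations $f_3(x,y) = {\ga}_3 x + c_3 + {\gb}_3 y$ and $f_j(x,y) = {\ga}_j x + c_j + {\gb}_j y$ satisfying the appropriate branch conditions. Since $(B;+)$ is Abelian, the inverse of the duality is harmless:
\[
f_i(x,y) = f_3(y,x) = {\ga}_3 y + c_3 + {\gb}_3 x = {\gb}_3 x + c_3 + {\ga}_3 y,
\]
and we obtain (\ref{sol30}) by setting ${\ga}_i = {\gb}_3$, ${\gb}_i = {\ga}_3$, $c_i = c_3$. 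The compatibility $f_1(c_2,c_2) = f_2(c_1,c_1)$ is preserved because it depends only on the diagonal values, which are unchanged by the duality.

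For the branch conditions, note that the dualization swaps the local factors ${\ga}_i$ and ${\gb}_i$ at the $f_i$-nodes involved, and a direct inspection of Definition~\ref{branches} shows that this transformation carries the branch-equalities (\ref{lin52})--(\ref{quadr52}) for the 15a-target exactly into (\ref{lin30})--(\ref{q30}) for (5.$k$). Uniqueness of $(B;+)$ up to isomorphism follows once more from the Albert Theorem. The main obstacle is combinatorial rather than algebraic: one must match each of the $15$ equations to its dualization-partner in the 15a-list (or to (\ref{3-10})) and keep track of how argument-swaps at $f_1$ or $f_2$ propagate through the $\Lbranch$ and $\Rbranch$ computations. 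No essentially new argument beyond Lemmas~\ref{intermedial}, \ref{15a} and \ref{extramedial} is required.
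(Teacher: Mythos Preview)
Your approach is essentially the same as the paper's: dualize one of the main operations to transform each (5.$k$) into an equation already covered by Lemma~\ref{15a}, apply that lemma, then translate back using commutativity of $+$. The paper carries this out by always dualizing $f_2$, works one case ($k=3 \to$ (5.25)) in detail, and then lists the explicit target for each of the remaining fourteen equations, all of which land in the Lemma~\ref{15a} list.

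One point to tighten: you allow the target to be the intermedial equation (\ref{3-10}) handled by Lemma~\ref{intermedial}, and then write that this ``supplies an Abelian group $(B;+)$''. That is not what Lemma~\ref{intermedial} gives you; it only produces a group, and indeed (\ref{3-10}) genuinely admits non-Abelian solutions. If any of the fifteen equations actually dualized to (\ref{3-10}), your argument would have a gap at precisely the step where you invoke commutativity to undo the duality. In fact none of them do---each (5.$k$) in the present list dualizes (via $f_2 \mapsto f_2^*$) to an equation in the Lemma~\ref{15a} list, never to (\ref{3-10}) or (\ref{3-23})---so the issue is moot. But you should state and verify this explicitly rather than leave (\ref{3-10}) as a possible landing point; otherwise the claim that $(B;+)$ is Abelian is unsupported.
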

\begin{proof}
(1)
To show that the pair $(f_1,f_2)$ of operations is a solution of (5.k), just replace $f_i(x,y)$ in (5.k) using (\ref{sol30}) and all conditions (\ref{lin30}), (\ref{q30}).

(2)
Let us prove that the solution given in the Lemma is general in the case $k = 3$\/.

The equation (\ref{3-3}) has arrangement of variables equal to $xyxu = uvyv$\/. Let us replace the operation $f_2$ in (\ref{3-3}) by the dual operation $f_3(x,y) = f_2^*(x,y) = f_2(y,x)$\/. We get the equation 
$${f_1(f_3(y,x),f_3(u,x))=f_3(f_1(y,v),f_1(u,v))}$$
with the arrangement of variables equal to $yxux = yvuv$\/. Normalizing (i.e. applying the permutation $(xy)$ to variables) we get the equation (\ref{3-25}) with a general solution given in Lemma \ref{15a}:
\eql{q1}{f_i(x,y) = {\ga}_i x + c_i + {\gb}_i y \quad (i = 1,3)}
where:
\begin{itemize}
\item $(B;+)$ is an arbitrary group,
\item $c_1, c_3$ are arbitrary elements of $B$ such that $f_1(c_3,c_3) = f_3(c_1,c_1)$\/,
\item ${\ga}_i, {\gb}_i \; (i = 1,3)$ are arbitrary automorphisms of + such that:
 \eql{q2}{\Lbranch(z, (\ref{3-25})) = \Rbranch(z, (\ref{3-25}))}
         for all linear variables $z$ of (\ref{3-25}) and
         \eql{q3}{\Lbranch(w, (\ref{3-25})) w + \Rbranch(w, (\ref{3-25})) w = 0} 
         for all quadratic variables $w$ from the equation.
\end{itemize}
Conditions (\ref{q2}) and (\ref{q3}) evaluate to:
$${\ga}_1{\ga}_3 = {\ga}_3{\ga}_1$$
$${\gb}_1{\ga}_3 = {\gb}_3{\ga}_1$$
$${\ga}_1{\gb}_3 x + {\gb}_1{\gb}_3 x = 0$$
$${\ga}_3{\gb}_1 v + {\gb}_3{\gb}_1 v = 0.$$

Define: ${\ga}_2 = {\gb}_3, {\gb}_2 = {\ga}_3, c_2 = c_3$ and replace in (\ref{q1}) to get: \\
$f_2(x,y) = f_3(y,x) = {\ga}_3 y + c_3 + {\gb}_3 x = {\gb}_2 y + c_2 + {\ga}_2 x = {\ga}_2 x + c_2 + {\gb}_2 y$, and
$${\ga}_1{\gb}_2 = {\gb}_2{\ga}_1$$
$${\gb}_1{\gb}_2 = {\ga}_2{\ga}_1$$
$${\ga}_1{\ga}_2 x + {\gb}_1{\ga}_2 x = 0$$
$${\gb}_2{\gb}_1 v + {\ga}_2{\gb}_1 v = 0.$$
which is:
$$\Lbranch(z,(5,3)) = \Rbranch(z,(5,3))$$
 for $z \in \{ y, u \}$\/, and
        $$\Lbranch(w, (5.3)) w + \Rbranch(x, (5.3)) w = 0,$$
for	  $w \in \{x,v\}$\/.

Trivially, $f_1(c_2,c_2) = f_2(c_1,c_1)$\/.

Analogously, we can transform (5.4) into (5.29),
(5.7) into (5.26),
(5.8) into (5.30),
(5.11) into (5.17),
(5.12) into (5.21),
(5.15) into (5.18),
(5.16) into (5.22),
(5.19) into (5.9),
(5.20) into (5.13),
(5.24) into (5.14),
(5.27) into (5.1),
(5.28) into (5.5),
(5.31) into (5.2),
(5.32) into (5.6)
and prove appropriate relationships between ${\ga}_i,{\gb}_i,c_i \, (i = 1,2)$ for these equations, using results given in the Theorem \ref{15a}.
\end{proof}

%

\begin{defn}
Let ${\partial}$ be operator acting on terms of the group $(B;+)$ so that 
\begin{equation*}
\partial(t) =
		\begin{cases}
             t, \enspace\qquad \text{ if } t \text{ is a monomial}, \\
             t_2 + t_1,   \text{ if } t = t_1 + t_2\/.
             \end{cases}
\end{equation*}
\end{defn}

It is easy to see that 
for all natural numbers $n$\/, $\partial(x_1 + x_2 + \cdots + x_n) = x_n + x_{n-1} + \cdots x_1$\/. In particular 
$\partial(x + y + z) = z + y + x$\/. Also,
for all even (odd) $j$ and all terms $t$\/:  ${\partial}^j(t) = t \; ({\partial}^j(t) = \partial(t))$\/.

We may now combine Lemmas \ref{intermedial} and \ref{extramedial} into:
\begin{thm}
\label{2}
A general solution of the equation \emph{(5.j) \; (j = 10,23)} is given by:
\begin{equation*}
		\begin{cases}
			    f_1(x,y) = {\ga}_1 x + c_1 + {\gb}_1 y  \\
			    f_2(x,y) = {\partial}^{\emph{j}}({\ga}_2 x + c_2 + {\gb}_2 y)
  	    \end{cases}
\end{equation*}
where:
\begin{itemize}
\item $(B;+)$ is an arbitrary group,
\item $c_1, c_2$ are arbitrary elements of $B$ such that $f_1(c_2,c_2) = f_2(c_1,c_1)$\/,
\item ${\ga}_i, {\gb}_i \; (i = 1,2)$ are arbitrary automorphisms of + such that:
$$\Lbranch(z, \emph{(5.j)}) = \Rbranch(z, \emph{(5.j)})$$
        for all linear variables $z$ of the equation \emph{(5.j)} and
$$\Lbranch(w_i, \emph{(5.j)}) w_i + c_i + \Rbranch(w_i, \emph{(5.j)}) w_i = c_i$$
         for $i \in \{ 1,2 \},$ where $w_1$ is the left quadratic variable while $w_2$ is the right quadratic variable of \emph{(5.j)}.
\end{itemize}
The group $(B;+)$ is unique up to isomorphism.
\end{thm}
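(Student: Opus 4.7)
The plan is to recognize that Theorem \ref{2} is a unified reformulation of Lemmas \ref{intermedial} and \ref{extramedial} via the $\partial^j$ operator, so no new analytical work is needed; I would split cases on the parity of $j$ and verify that each case collapses to the corresponding lemma.

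For $j = 10$, since $10$ is even, $\partial^{10}$ acts as the identity on terms, and the stated formula for $f_2$ becomes $f_2(x,y) = \alpha_2 x + c_2 + \beta_2 y$, which is precisely the form given in Lemma \ref{intermedial}. I would then match the bookkeeping: in equation (\ref{3-10}) the linear variables are $x$ and $u$, the left quadratic variable is $w_1 = y$ and the right quadratic variable is $w_2 = v$, so the conditions (\ref{lin51}) and (\ref{quadr51}) of Lemma \ref{intermedial} translate directly into the two branch conditions asserted in Theorem \ref{2}.

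For $j = 23$, since $23$ is odd, $\partial^{23} = \partial$, and using the observation (immediately preceding the theorem) that $\partial(x_1 + x_2 + x_3) = x_3 + x_2 + x_1$ one obtains $f_2(x,y) = \beta_2 y + c_2 + \alpha_2 x$, exactly the form given in Lemma \ref{extramedial}. In equation (\ref{3-23}) the linear variables are $y$ and $u$, the left quadratic variable is $w_1 = x$ and the right quadratic variable is $w_2 = v$, so conditions (\ref{lin53})--(\ref{q532}) of Lemma \ref{extramedial} translate to the conditions in Theorem \ref{2}. The only subtlety, and the place where I would be most careful, is making sure the positions of $\Lbranch$ and $\Rbranch$ on either side of the central $c_i$ are those prescribed by the $\partial^j$-rewritten form — note in particular the swapped order $\Rbranch(v,(\ref{3-23})) v + c_2 + \Lbranch(v,(\ref{3-23})) v$ in (\ref{q532}), which is exactly what one gets after applying $\partial$ to $\alpha_2 v + c_2 + \beta_2 v$. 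This is already correctly built into Lemma \ref{extramedial}, so it amounts only to a routine verification, and uniqueness of $(B;+)$ carries over from either lemma via the Albert theorem.
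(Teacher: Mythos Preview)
Your proposal is correct and matches the paper's approach exactly: the paper presents Theorem \ref{2} with the preamble ``We may now combine Lemmas \ref{intermedial} and \ref{extramedial} into:'' and gives no further proof, so the intended argument is precisely the parity-of-$j$ case split you describe. Your identification of the linear and quadratic variables in each case, and your handling of the $\partial^j$ operator, are both on target.
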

%
Likewise, the Theorem \ref{main1} and Lemmas \ref{15a} and \ref{15b} can be combined into:
\begin{thm}
\label{main2}
A general solution of the equation \emph{$(m.{\rm{j}_m})$ \; $(m = 4,5; \, 1 \leq \rm{j}_4 \leq 16; \, 1 \leq \rm{j}_5 \leq 32; \, \rm{j}_5 \neq 10,23)$} is given by:
$$f_i(x,y) = {\ga}_i x + c_i + {\gb}_i y \quad (i = 1,2)$$
where:
\begin{itemize}
\item $(B;+)$ is an arbitrary Abelian group,
\item $c_1, c_2$ are arbitrary elements of $B$ such that $f_1(c_2,c_2) = f_2(c_1,c_1)$\/,
\item ${\ga}_i, {\gb}_i \; (i = 1,2)$ are arbitrary automorphisms of + such that:
        $$\Lbranch(z, (m.\rm{j}_m)) = \Rbranch(z, (m.\rm{j}_m))$$
         for all linear variables $z$ of $(m.\rm{j}_m)$ and
         $$\Lbranch(w, (m.\rm{j}_m)) w + \Rbranch(w, (m.\rm{j}_m)) w = 0$$
         for all quadratic variables $w$ from the equation.
\end{itemize}
The group $(B;+)$ is unique up to isomorphism.
\end{thm}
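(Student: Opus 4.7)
The plan is to present Theorem \ref{main2} as a straightforward consolidation of the three preceding results: Theorem \ref{main1}, Lemma \ref{15a}, and Lemma \ref{15b}. Once the index $m$ and $\mathrm{j}_m$ are fixed, the corresponding equation falls into exactly one of three regimes, each of which has already been handled.

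The case split I would write is as follows. For $m = 4$ (all $16$ equations with Krsti\'{c} graph $K_{3,3}$), apply Theorem \ref{main1} directly; note that because these equations are balanced, every variable is linear, so the condition $\Lbranch(w,\cdot)\,w + \Rbranch(w,\cdot)\,w = 0$ in the statement is vacuous and what remains is exactly \eqref{lin4}. For $m = 5$ and $\mathrm{j}_5 \in \{1,2,5,6,9,13,14,17,18,21,22,25,26,29,30\}$, invoke Lemma \ref{15a}, whose hypotheses and conclusion already match the statement verbatim. For $m = 5$ and $\mathrm{j}_5 \in \{3,4,7,8,11,12,15,16,19,20,24,27,28,31,32\}$, invoke Lemma \ref{15b}, again matching verbatim. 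Together with the two excluded indices $\mathrm{j}_5 = 10, 23$ (treated separately in Theorem \ref{2}), this accounts for all $16 + 32 = 48$ equations, and no further work is required for the existence of a solution of the stated form or for the necessity of the branch conditions.

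The uniqueness of the Abelian group $(B;+)$ up to isomorphism is inherited in each case from the corresponding earlier result, which in turn appeals to the Albert theorem (any two groups isotopic to a common quasigroup are isomorphic). The sufficiency direction (verifying that the displayed formulas for $f_1,f_2$ really do satisfy the equation whenever all of the branch conditions hold) is again inherited, and could alternatively be re-derived in one line by substituting $f_i(x,y) = \alpha_i x + c_i + \beta_i y$ into $(m.\mathrm{j}_m)$, exploiting commutativity of $+$ to collect the $\alpha\alpha$, $\alpha\beta$, $\beta\alpha$, $\beta\beta$ coefficients of each variable, and observing that the equality of the two sides is equivalent precisely to the linear and quadratic branch conditions together with $f_1(c_2,c_2) = f_2(c_1,c_1)$.

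There is essentially no obstacle; the content is bookkeeping. The only point worth flagging in writing is the mild notational check that in the balanced case $m = 4$ the quadratic-variable condition is vacuous, so that the unified statement does specialize correctly to Theorem \ref{main1}, and that the two excluded equations $(5.10)$ and $(5.23)$ are genuinely the only ones whose solutions require the more general condition $\Lbranch(w_i)\,w_i + c_i + \Rbranch(w_i)\,w_i = c_i$ rather than $\Lbranch(w)\,w + \Rbranch(w)\,w = 0$, which is why they must be collected in Theorem \ref{2} rather than here.
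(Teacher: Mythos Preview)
Your proposal is correct and matches the paper's own treatment exactly: the paper presents Theorem \ref{main2} with no separate proof, prefacing it only by the sentence ``Likewise, the Theorem \ref{main1} and Lemmas \ref{15a} and \ref{15b} can be combined into:''. Your additional remark that for $m=4$ every variable is linear (so the quadratic-branch condition is vacuous and the statement specializes to Theorem \ref{main1}) is a useful clarification that the paper leaves implicit.
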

%


\section{Algebras with Parastrophically Uncancellable Quadratic Hyperidentities}

By \cite{Movsisyan1998, Movsisyan2001}, a hyperidentity (or $\forall (\forall)$-identity) is a second-order formula of the following form:
\[\forall f_1,\ldots,f_k \forall x_1,\ldots,x_n \qquad (w_1=w_2),\]
where $w_1$, $w_2$ are words (terms) in the alphabet of function variables $f_1, \ldots, f_k$ and object variables $x_1, \ldots, x_n$. However hyperidetities are usually presented without universal quantifiers: $w_1=w_2$. The hyperidentity $w_1=w_2$ is said to be satisfied in the algebra $(B; F)$ if this equality holds whenever every function variable $f_i$ is replaced by an arbitrary operation of the corresponding arity from $F$ and every object variable $x_i$ is replaced by an arbitrary element of $B$. 

Now, as a consequence of the results of the previous section, we can establish the following representation of a binary algebra satisfying one of the non-gemini hyperidentities.

\begin{thm}
\label{hyper}
Let $(B; F)$ be a binary  algebra with quasigroup
operations which satisfy one of the non--gemini hyperidentities \emph{$(m.{\rm{j}_m})$ \; $(m = 4,5; \, 1 \leq \rm{j}_4 \leq 16; \, 1 \leq \rm{j}_5 \leq 32)$}. 
Then there exists an Abelian group $(B;+)$ such that
every operation $f_i\in F$ is represented by:
\[
f_i(x,y)=\alpha_i(x)+c_i+\beta_i(y),
\]
where:
\begin{itemize}
\item $c_i \; (i = 1,\ldots, |F|)$ are arbitrary elements of $B$ such that $f_l(c_k,c_k) = f_k(c_l,c_l)$\/ for $1 \leqslant l,k \leqslant |F|$\/,
\item ${\ga}_i, {\gb}_i \; (i = 1,\ldots, |F|)$ are arbitrary automorphisms of + such that:
        $$\Lbranch(z, (m.\rm{j}_m)) = \Rbranch(z, (m.\rm{j}_m))$$
         for all linear variables $z$ of $(m.\rm{j}_m)$ and
         $$\Lbranch(w, (m.\rm{j}_m)) w + \Rbranch(w, (m.\rm{j}_m)) w = 0$$
         for all quadratic variables $w$ from the equation.
\end{itemize}
\end{thm}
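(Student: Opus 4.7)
The plan is to bootstrap the pairwise results of the previous section, namely Theorem \ref{main2} (together with Theorem \ref{2} in the cases $\mathrm{j}_5 \in \{10, 23\}$), from pairs of operations to the whole algebra $(B; F)$. By the very definition of hyperidentity, every substitution of the two function variables $f_1, f_2$ in $(m.\mathrm{j}_m)$ by operations of $F$ yields a true equation; in particular, every ordered pair $(f_l, f_k) \in F \times F$ (including the case $l = k$) satisfies the quasigroup equation $(m.\mathrm{j}_m)$ as a two-operation functional equation, and so the pairwise theorems become directly applicable.

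I would first fix a distinguished operation $f_0 \in F$ and apply the appropriate pairwise theorem to the pair $(f_0, f_0)$, obtaining an Abelian group $(B; +)$, automorphisms $\alpha_0, \beta_0 \in \mathrm{Aut}(B; +)$, and a constant $c_0 \in B$ with $f_0(x,y) = \alpha_0 x + c_0 + \beta_0 y$. Then, for each $f_k \in F \setminus \{f_0\}$, I apply the same theorem to the pair $(f_0, f_k)$, obtaining another Abelian group $(B; +_k)$ together with linear representations of both $f_0$ and $f_k$ over $+_k$. Since $f_0$ is linear (hence principally isotopic) over both $+$ and $+_k$, the Albert theorem produces a group isomorphism $\phi_k \colon (B; +_k) \to (B; +)$; transporting the linear representation of $f_k$ along $\phi_k$ yields $f_k(x,y) = \alpha_k x + c_k + \beta_k y$ over the fixed Abelian group $(B; +)$.

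Finally, the compatibility relations $f_l(c_k, c_k) = f_k(c_l, c_l)$ and the branch conditions on $\alpha_l, \beta_l, \alpha_k, \beta_k$ for every pair $(f_l, f_k)$ follow by applying the pairwise theorem to that pair and rewriting the resulting constraints in terms of the fixed group $(B; +)$. The central obstacle will be precisely the coherent identification step above: the pairwise theorems pin the underlying Abelian group down only up to isomorphism, so one must fix once and for all a canonical group structure (concretely, the principal loop isotope of $f_0$ at a chosen identity element) and pull back every operation's representation to this canonical group along the Albert isomorphism, checking that the resulting $\alpha_k, \beta_k, c_k$ do not depend on the auxiliary choice of $f_0$.
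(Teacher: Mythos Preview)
Your plan mirrors the paper's proof almost exactly: fix a distinguished operation, apply the pairwise results diagonally to obtain the group, then apply them to each mixed pair $(f_0,f_k)$ and use Albert's theorem to identify all the resulting groups with the fixed one. You are more explicit than the paper about the transport-along-isomorphism step (the paper simply says the $f_i$ are ``principally isotopic to $+$'' and that ``the rest of the proof is easy''), but the architecture is identical.

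One small gap: for $\mathrm{j}_5\in\{10,23\}$ the pairwise result you invoke, Theorem~\ref{2}, applied to the diagonal pair $(f_0,f_0)$ yields only a \emph{group}, not an Abelian one, so your sentence ``obtaining an Abelian group $(B;+)$'' is not yet justified in those two cases. The paper closes this by appealing to the classical single-quasigroup structure theorems---Theorem~\ref{quasimedial} (Toyoda) for $(5.10)$ and Theorem~\ref{quasipara} (N\v{e}mec--Kepka) for $(5.23)$---to force commutativity of $+$; you should insert the analogous step. Finally, your worry about independence from the choice of $f_0$ is superfluous: the theorem asserts only the existence of one Abelian group and one linear representation, so any fixed $f_0$ will do.
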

\begin{proof}
Let us consider the pair $(f_1,f_1)$ of operations satisfying equation $(m.{\rm{j}_m})$ (for $m = 4$ or 5; $\rm{j}_4$ is some of $1,2,\dots,16$ while $\rm{j}_5$  is some of $1,2,\dots,32$). Then
$$f_1(x,y) = {\ga}_1(x) + c_1 + {\gb}_1(y)$$
where $+$ is a group and ${\ga}_1,{\gb}_1$ its automorphisms. In case of the equation (5.10) ((5.23)) the group $+$ is commutative by Theorem \ref{quasimedial} (Theorem \ref{quasipara}). In all other cases $+$ is commutative by Theorem \ref{main2}.

For any $i \in F, \; i \neq 1$\/, the pair $(f_1,f_i)$ also satisfies $(m.{\rm{j}_m})$\/, hence both are principally isotopic to a group (perhaps other than $+$). Anyway, $f_i$ is also principally isotopic to $+$ and by Theorem \ref{2} or \ref{main2}
$$f_i(x,y) = {\ga}_i(x) + c_i + {\gb}_i(y)$$
where $c_i\in B$ and $\alpha_i,\beta_i\in Aut(B;+)$ such that
 $$\Lbranch(z, (m.\rm{j}_m)) = \Rbranch(z, (m.\rm{j}_m))$$
         for all linear variables $z$ of $(m.j_m)$ and
         $$\Lbranch(w, (m.\rm{j}_m)) w + \Rbranch(w, (m.\rm{j}_m)) w = 0$$
         for all quadratic variables $w$ from the equation.

The rest of the proof is easy.
\end{proof}
%


\section*{Acknowledgement}

The work of A. Krape\v{z} is supported by the Ministry of Education, Science and Technological Development of the Republic of Serbia, grants ON 174008 and ON 174026.

The work of Yu. Movsisyan is supported by the 'Center of Mathematical Research' of the state Committee of Science of the Republic of Armenia.



\end{document}